\newtheorem{theorem}{Theorem}
\newtheorem{lemma}[theorem]{Lemma}
\newtheorem{proposition}[theorem]{Proposition}
\newcommand{\os}[2]{\mathop{\langle {#2} \rangle}\nolimits_{#1}} 
\newcommand{\opp}[1]{\operatorname{\Pi}_{#1}^*} 
\newcommand{\app}[1]{\operatorname{\Pi}^{\ast}_{#1,\scriptscriptstyle+}} 
\newcommand{\appm}[1]{\operatorname{\Pi}^{\ast}_{#1,\scriptscriptstyle\pm}} 
\newcommand{\ofpp}[1]{\operatorname{\Pi}^{\ast}_{#1}\!} 
\newcommand{\fpp}[2]{\operatorname{\Pi}^{\ast,{\it #1}}_{#2}\!} 
\newcommand{\ppp}[2]{\operatorname{\Pi}^{\ast,{\it #1}}_{#2,\scriptscriptstyle+}}
\newcommand{\ppm}[2]{\operatorname{\Pi}^{\ast,{\it #1}}_{#2,\scriptscriptstyle-}}
\newcommand{\pppm}[2]{\operatorname{\Pi}^{\ast,{\it #1}}_{#2,\scriptscriptstyle\pm}}
\newcommand{\om}{\operatorname{\rm Z}_p} 
\newcommand{\omp}{\operatorname{\rm Z}_p^*\!} 
\newcommand{\sln}{\operatorname{SL}(n)}
\newcommand{\gln}{\operatorname{GL}(n)}
\newcommand {\R} {\mathbb R}
\newcommand{\B}{{B^n}}
\newcommand {\sn} {{\mathbb S}^{n-1}}
\renewcommand{\d}{\,\mathrm{d}}
\newcommand{\vol}[1]{ \vert #1\vert}
\newcommand{\plus}[2]{(#1)_{\scriptscriptstyle +}^{ #2}}
\newcommand{\pl}[1]{#1_{\scriptscriptstyle +}}
\newcommand{\minus}[2]{(#1)_{\scriptscriptstyle -}^{ #2}}
\newcommand{\plusminus}[2]{(#1)_{\scriptscriptstyle \pm}^{ #2}}
\newcommand{\mn}[1]{#1_{\scriptscriptstyle -}}
\newcommand{\plmn}[1]{#1_{\scriptscriptstyle \pm}}
\renewcommand \div {\operatorname{div}}
\renewcommand{\chi}{\operatorname{1}}
\newcommand{\ra}[1]{\mathbin{\tilde +_{\scriptscriptstyle#1}}}
\newcommand{\Bp}{B_{p'\!,\scriptscriptstyle+}}
\title{Affine fractional $L^p$ Sobolev inequalities}
\author{Juli\'an Haddad}
\address{Departamento de Matem\'atica, ICEx, Universidade Federal de Minas Gerais, 30.123-970, Belo Horizonte, Brazil}
\email{jhaddad@mat.ufmg.br}
\author{Monika Ludwig}
\address{Institut f\"ur Diskrete Mathematik und Geometrie,
Technische Universit\"at Wien,
Wiedner Hauptstra\ss e 8-10/1046,
1040 Wien, Austria}
\email{monika.ludwig@tuwien.ac.at}
\begin{document}

\maketitle
\begin{abstract}
Sharp affine fractional $L^p$ Sobolev inequalities for functions on $\R^n$ are
established. The new inequalities are stronger than (and directly imply) the sharp fractional $L^p$ Sobolev inequalities. They are fractional versions of the affine $L^p$ Sobolev inequalities of Lutwak, Yang, and Zhang. In addition, affine fractional asymmetric $L^p$ Sobolev inequalities are established.

\bigskip
{\noindent 2020 AMS subject classification: 46E35 (35R11, 52A40)}
\end{abstract}

\section{Introduction}
Sharp fractional $L^2$ Sobolev inequalities are receiving  increasing attention in the last decades. 
They are central in the study of solutions of equations involving the fractional Laplace operator $(-\Delta)^{1/2}$ which arises naturally in many non-local problems such as the stationary form of reaction-diffusion equations \cite{cafarelli2012}, the Signorini problem (and its equivalent formulation as the thin obstacle problem) \cite{cafarelli2006}, and the Dirichlet-to-Neumann operator of harmonic functions in the half space \cite{silvestre2007}.
Also, the general operators $(-\Delta)^s$ for $s \in (0,1)$ arise in stochastic theory, associated with symmetric Levy processes (see \cite{silvestre2007} and the references therein).

Let $0< s<1$ and $1\le p < n/s$.  The fractional $L^p$ Sobolev inequalities state that
\begin{align}\label{eq_fracsob}
\Vert f\Vert_{{\frac {n p}
{n-p s}}}^p
\le
\sigma_{n,p,s}\int_{\R^n} \int_{\R^n} \frac{\vert f(x)-f(y)\vert^p}{\vert x-y \vert^{n+p s}}\d x\d y
\end{align}
for $f\in W^{s,p}(\R^n)$, the fractional $L^p$ Sobolev space of  functions $f\in L^p(\R^n)$ with finite right side in \eqref{eq_fracsob} (see, for example, \cite{Mazya}). 
In general, the optimal constants  $\sigma_{n,p,s}$ and extremal functions are not known (see \cite{BMS} for a conjecture).
Equality is always attained in \eqref{eq_fracsob}. 
For $p=1$, the extremal functions of \eqref{eq_fracsob}  are  multiples of indicator functions of balls and the constants are explicitly known. 
The only further known case is $p=2$, where the constants  $\sigma_{n,2,s}$ can be obtained by duality from Lieb's sharp Hardy--Littlewood--Sobolev inequalities \cite{Lieb83} (see, for example,  \cite{Carlen2017}). 
The asymptotic behavior of $\sigma_{n,p,s}$ as $s \to 1^-$ was studied in \cite{BBM}. 
Almgren and Lieb \cite{AlmgrenLieb} and Frank and Seiringer \cite{FrankSeiringer} showed that the extremal functions of \eqref{eq_fracsob} are  radially symmetric and   of constant sign. 

By a result of Bourgain, Brezis, and Mironescu~\cite{BourgainBrezisMironescu}, 
\begin{equation}\label{eq_BBM}
\lim_{s\to 1^-} p (1-s) \int_{\R^n} \int_{\R^n} \frac{\vert f(x)-f(y)\vert^p}{\vert x-y \vert^{n+p s}}\d x\d y = \alpha_{n,p} \int_{\R^n} \vert \nabla f(x)\vert^p \d x
\end{equation}
for $f\in W^{1,p}(\R^n)$,  the Sobolev space of $L^p$ functions $f$ with weak $L^p$ gradient $\nabla f$, 
where 
\begin{equation}\label{eq_alpha}
\alpha_{n,p}= \int_{\sn} \vert\langle \xi,\eta\rangle \vert^p \d\xi\
\end{equation}
for any $\eta\in\sn$. Here, integration on the unit sphere $\sn$ is with respect to the $(n-1)$-dimensional Hausdorff measure, $\omega_n$ is the volume of the $n$-dimensional unit ball and $\langle \cdot,\cdot\rangle$ is the inner product on $\R^n$. 
For $p=1$ and $p=2$, this allows to deduce the sharp $L^p$ Sobolev inequalities from \eqref{eq_fracsob} by calculating the limit of $\sigma_{n,p,s}/(1-s)$ as $s\to 1^-$. 

Zhang \cite{Zhang99} and Lutwak, Yang, and Zhang  \cite{LYZ2002b} obtained the following sharp affine $L^p$ Sobolev inequality that is significantly stronger than the classical $L^p$ Sobolev inequality:
\begin{align}
\label{eq_affineLpSobolev}
\Vert f\Vert_{{\frac {n p}{n-p}}}^p
\le
\sigma_{n,p} \frac {n \omega_n^{\frac{n+p}n}} {\alpha_{n,p}}  \vol{\ofpp p\, f}^{-\frac {p }n}
\le
\sigma_{n,p}\int_{\R^n}  \vert \nabla f(x) \vert^p \d x
\end{align}
for $f\in W^{1,p}(\R^n)$ and $1<p<n$, where the inequality between the first and third terms is the classical $L^p$ Sobolev inequality and  the optimal constants $\sigma_{n,p}$ were determined by Aubin \cite{Aubin1976} and Talenti \cite{Talenti1976}. We have rewritten the explicit constant for the first inequality from \cite{LYZ2002b} using  \eqref{eq_alpha}. Here $\ofpp p\, f$ is the $L^p$ polar projection body of $f$, a convex body associated to $f$ that was introduced  with different notation in  \cite{LYZ2002b} (see Section \ref{sec_lp_polar}), and $\vol{\cdot}$ is the $n$-dimensional Lebesgue measure. 

The main aim of this paper is to establish affine fractional $L^p$ Sobolev inequalities that are stronger than the Euclidean fractional $L^p$ Sobolev inequalities from \eqref{eq_fracsob} and are fractional counterparts of \eqref{eq_affineLpSobolev}. The case $p=1$ was studied in \cite{HaddadLudwig_fracsob}, so from now on we  let $p>1$.

\begin{theorem}\label{thm_afracsobo}
Let $0<s<1$ and $1<p<n/s$. For $f\in W^{s,p}(\R^n)$, 
\begin{multline}
\Vert f\Vert_{{\frac {n p}{n-p s}}}^p
\\\le
\sigma_{n,p,s} n\omega_n^{\frac{n+p s}n}  \Big(\frac 1n \int_{\sn}\!\!\! \big( \int_0^\infty \!\!t^{p s-1} \!\int_{\R^n}\! \vert f(x+ t\xi)-f(x)\vert^p\d x \d t\big)^{-\frac n{p s}}\d \xi\Big)^{-\frac {p s}n}\\
\le
\sigma_{n,p,s}\int_{\R^n} \int_{\R^n} \frac{\vert f(x)-f(y)\vert^p}{\vert x-y \vert^{n+p s}}\d x\d y.
\end{multline}
There is equality in the first inequality if and only if $f=h_{s,p}\circ \phi$ for some $\phi\in\gln$, where $h_{s,p}$ is an extremal function of \eqref{eq_fracsob}. There is equality in the second inequality if $f$ is radially symmetric.
\end{theorem}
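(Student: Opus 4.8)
\emph{Plan of proof.} Put $q=n+ps$ and write $E(f)(\xi)$ for the directional energy inside the middle term, so that --- in the notation of Section~\ref{sec_lp_polar} --- the middle term equals $\sigma_{n,p,s}\,n\omega_n^{q/n}\vol{\ofpp p\, f}^{-ps/n}$, where $n\vol{\ofpp p\, f}=\int_{\sn}E(f)(\xi)^{-n/(ps)}\d\xi$, and a change to polar coordinates in $y-x$ gives $\int_{\sn}E(f)(\xi)\d\xi=\int_{\R^n}\int_{\R^n}|f(x)-f(y)|^p|x-y|^{-n-ps}\d x\d y$; this identity is used in both parts. For the \emph{second inequality} I would simply invoke Jensen's inequality on the sphere: since $-n/(ps)<0<1$, applying it to the convex function $\tau\mapsto\tau^{-ps/n}$ against the normalized surface measure on $\sn$ yields $\bigl(\tfrac1{n\omega_n}\int_{\sn}E(f)^{-n/(ps)}\d\xi\bigr)^{-ps/n}\le\tfrac1{n\omega_n}\int_{\sn}E(f)\d\xi$; multiplying by the appropriate powers of $\omega_n$ (using $\omega_n^{q/n}\omega_n^{-ps/n}=\omega_n$) and the identity above turns this into $n\omega_n^{q/n}\vol{\ofpp p\, f}^{-ps/n}\le\int_{\R^n}\int_{\R^n}|f(x)-f(y)|^p|x-y|^{-n-ps}\d x\d y$, and multiplying by $\sigma_{n,p,s}$ finishes it. Equality in Jensen forces $E(f)$ to be constant on $\sn$, which holds whenever $f$ is radially symmetric.

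For the \emph{first inequality} the plan is to reduce to radial functions. First, one may assume $f\ge0$: replacing $f$ by $|f|$ preserves $\|f\|_{np/(n-ps)}$ and, since $\bigl||a|-|b|\bigr|\le|a-b|$, gives $E(|f|)(\xi)\le E(f)(\xi)$ for every $\xi$ and hence $\vol{\ofpp p\, f}\le\vol{\ofpp p\,|f|}$, so (as $-ps/n<0$) it is enough to treat $|f|$. Let $f^\star$ be the symmetric decreasing rearrangement of $f$; it is radial and $\|f^\star\|_{np/(n-ps)}=\|f\|_{np/(n-ps)}$. Since $f^\star$ is radial, $E(f^\star)$ is constant on $\sn$, so the chain of the previous paragraph is an \emph{equality} for $f^\star$, and therefore the Euclidean fractional $L^p$ Sobolev inequality \eqref{eq_fracsob}, applied to $f^\star$, reads precisely
\[
\|f^\star\|_{np/(n-ps)}^p\le\sigma_{n,p,s}\,n\omega_n^{q/n}\vol{\ofpp p\, f^\star}^{-ps/n}.
\]
Thus it suffices to establish the \emph{fractional $L^p$ Petty projection inequality}
\[
\vol{\ofpp p\, f}\le\vol{\ofpp p\, f^\star},
\]
for then $-ps/n<0$ gives $\vol{\ofpp p\, f^\star}^{-ps/n}\le\vol{\ofpp p\, f}^{-ps/n}$, which, combined with the preceding display and $\|f^\star\|_{np/(n-ps)}=\|f\|_{np/(n-ps)}$, yields the first inequality of the theorem. (Equivalently, $\vol{\ofpp p\, f}\,\|f\|_{np/(n-ps)}^{n/s}$ is invariant under $\gln$ and under dilations of $f$, and the first inequality asserts that this quantity is largest precisely on the orbit $\{h_{s,p}\circ\phi:\phi\in\gln\}$.)

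The remaining step, the fractional $L^p$ Petty projection inequality $\vol{\ofpp p\, f}\le\vol{\ofpp p\, f^\star}$, is the main obstacle, and I would prove it by Steiner symmetrization: show that for any hyperplane $H=e^{\perp}$ and the corresponding Steiner symmetral $f^{H}$ of $f$ one has $\vol{\ofpp p\, f}\le\vol{\ofpp p\, f^{H}}$, and then iterate along a sequence of hyperplanes whose successive symmetrizations converge to $f^\star$, a routine limiting argument concluding the proof. The single-step estimate is the heart of the matter --- the fractional analogue of the behaviour of the $L^p$ projection body under Steiner symmetrization exploited by Lutwak, Yang, and Zhang --- and I expect it to follow, after writing $E(f)(\xi)$ as a weighted integral over $t>0$ of $\|f(\cdot+t\xi)-f\|_p^p$, from a Riesz-type rearrangement inequality carried out in the two-dimensional planes spanned by $e$ and $\xi$, comparing $\|f^{H}(\cdot+t\xi)-f^{H}\|_p$ with $\|f(\cdot+t\xi)-f\|_p$. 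The hardest part will be the equality case of this step, which should force $f$ to be a linear image of a radial function; together with the equality case of \eqref{eq_fracsob} --- so that $f^\star$, being an extremal function of \eqref{eq_fracsob}, is a dilate of $h_{s,p}$ --- it yields the stated characterization $f=h_{s,p}\circ\phi$ with $\phi\in\gln$. Conversely equality does hold along this whole orbit, since for $f=h_{s,p}$ both sides of the first inequality equal $\|h_{s,p}\|_{np/(n-ps)}^p$ (by the radiality of $h_{s,p}$ and equality in \eqref{eq_fracsob}) and both sides transform the same way under $\gln$.
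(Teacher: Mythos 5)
Your skeleton matches the paper's: the reduction from $f$ to $|f|$ via $\bigl||a|-|b|\bigr|\le|a-b|$, the second inequality as a Jensen/H\"older step (which is exactly the dual mixed volume inequality \eqref{eq_mixedvolume} applied through \eqref{eq_dualmixed_func} with $K=\B$, equality when the directional energy is constant), and the reduction of the first inequality to the Euclidean inequality \eqref{eq_fracsob} for $f^\star$ combined with the volume inequality $\vol{\fpp s p f}\le\vol{\fpp s p {f^\star}}$. But that volume inequality, together with its equality case, is precisely the paper's affine fractional P\'olya--Szeg\H o inequality (Theorem \ref{thm_aPS}) and is the entire substance of the theorem; in your proposal it is left as an expectation (``I expect it to follow\dots''), so there is a genuine gap. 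Moreover, the mechanism you sketch for the single Steiner step cannot work as stated: it is not true that Steiner symmetrization decreases each directional energy $E(f)(\xi)=\int_0^\infty t^{-ps-1}\|f(\cdot+t\xi)-f\|_p^p\,\d t$. Take (a smoothed version of) the indicator of a long sheared strip $\{0\le x_1\le L,\ 0\le x_2-kx_1\le 1\}$ and $H=e_2^{\perp}$: in the shear direction $\xi\propto(1,k)$ the original differences $\|f(\cdot+t\xi)-f\|_p^p$ are of order $t$, while after symmetrization the translated and untranslated strips separate vertically for $t\gtrsim 1/k$ and the energy in that direction becomes of order $L$; so $E(f^H)(\xi)>E(f)(\xi)$ for suitable $\xi$, $t$, $L$. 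Hence a comparison of $\|f^H(\cdot+t\xi)-f^H\|_p$ with $\|f(\cdot+t\xi)-f\|_p$ for each fixed $\xi$ cannot yield the volume monotonicity; one would need a genuinely different (e.g.\ paired-directions) argument, and on top of that an equality analysis along infinitely many Steiner steps, which you do not indicate how to perform.

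For comparison, the paper proves the needed inequality in one Schwarz-symmetrization stroke, with no Steiner iteration: a layer-cake decomposition of $\plus{f(x)-f(y)}{p}$ reduces the anisotropic energy $\int\int\plus{f(x)-f(y)}{p}\Vert x-y\Vert_K^{-n-ps}\d x\d y$ to integrals of the form treated by the Riesz rearrangement inequality (Theorem \ref{thm_BLL}), giving the anisotropic P\'olya--Szeg\H o inequalities of Theorems \ref{thm_fburchard+} and \ref{thm_fburchard} for an \emph{arbitrary} star body $K$; the self-referential choice $K=\ppp s p f$ (resp.\ $\fpp s p f$) together with the dual mixed volume inequality then yields Theorems \ref{thm_aPS+} and \ref{thm_aPS}, and Burchard's theorem (Theorem \ref{thm_burchard}) forces ellipsoidal level sets in the equality case, i.e.\ $f$ is a translate of $f^\star\circ\phi$ with $\phi\in\sln$, which combined with the radial symmetry of the extremals of \eqref{eq_fracsob} gives the stated characterization. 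If you want to keep your outline, the piece you must actually prove is a Schwarz-type statement of this kind; your Jensen step and the radial reduction can stand. Two small slips: the directional energy must carry the weight $t^{-ps-1}$, as in \eqref{eq_defpp}, for your polar-coordinates identity to hold, and the body entering your argument is the fractional body $\fpp s p f$, not $\ofpp p f$.
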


In order to prove Theorem \ref{thm_afracsobo}, we introduce the $s$-fractional $L^p$ polar projection body $\fpp s p f$ associated to $f$, defined as the star-shaped set whose gauge function for $\xi\in\sn$ is 
\begin{equation}\label{eq_defpp}
\Vert{\xi}\Vert_{\fpp {s}p f}^{p s}= \int_0^\infty t^{-p s-1}\, \int_{\R^n}\vert f(x +t\xi)-f(x)\vert^p \d x\d t
\end{equation}
(see Section \ref{sec_fracprojectionbody} for details). The affine fractional Sobolev inequality  now can be written as
\begin{equation}\label{eq_fracsobvol}
\Vert f\Vert_{{\frac {n p}{n-p s}}}^p
\le
\sigma_{n,p,s} n\omega_n^{\frac{n+p s}n} \vol{\fpp {s} pf}^{- \frac{p s}n}.
\end{equation}
Since both sides of \eqref{eq_fracsobvol} are invariant under translations of $f$, and for volume-preserving linear transformations $\phi: \R^n\to \R^n$,
\[ \fpp {s}p \,(f\circ\phi^{-1}) = \phi \fpp {s}p f,\]
it follows that \eqref{eq_fracsobvol} is an affine inequality. In Theorem \ref{thm_limitM}, we will show that
\begin{equation}\label{eq_limitvol}
	\lim_{s\to 1^-} p(1-s) \vol{\fpp s p f}^{-\frac{p s}n} =  \vol{\opp pf}^{-\frac p n},
\end{equation}
which establishes the connection to the $L^p$ polar projection bodies introduced by Lutwak, Yang and Zhang  \cite{LYZ2002b}. 

\goodbreak
In Section \ref{sec_fracaprojectionbody} we introduce fractional asymmetric $L^p$ polar projection bodies as fractional counterparts of the asymmetric $L^p$ polar projection bodies of Haberl and Schuster
\cite{Haberl:Schuster2}, which in turn are functional versions of the asymmetric $L^p$ polar projection bodies of convex bodies introduced in \cite{Ludwig:Minkowski}. We obtain affine fractional asymmetric $L^p$~Sobolev inequalities for non-negative functions that are stronger than the inequalities for the symmetric fractional $L^p$ polar projection bodies. 

In the proofs of the main results, we use anisotropic fractional Sobolev norms, which were introduced in \cite{Ludwig:fracperi, Ludwig:fracnorm} and depend on a star-shaped set $K\subset \R^n$. In Section \ref{sec_optimal} we discuss  which choice of $K$ (with given volume) gives the minimal fractional Sobolev norm and connect it to the corresponding quest for an optimal $L^p$ Sobolev norm solved by Lutwak, Yang, and Zhang \cite{LYZ2006}.

\section{Preliminaries}\label{sec_prelim}

We collect results on function spaces, Schwarz symmetrization, star-shaped sets, anisotropic Sobolev norms and $L^p$ polar projection bodies, that will be used in the following.

\subsection{Function spaces}
For $p\ge 1$ and measurable $f:\R^n\to\R$, let
\[ \Vert f\Vert_p =\Big(\int_{\R^n} \vert f(x)\vert^p\d x \Big)^{1/p}.\]
We set $\{f\ge t\}=\{ x\in \R^n: f(x)\ge t\}$ for $t\in\R$ and use similar notation for level sets, etc. We say that $f$ is non-zero, if $\{f\ne 0\}$ has positive measure, and we identify functions that are equal up to a set of measure zero. For $p\ge 1$, let
	\[L^{p}(\R^n) = \Big\{f: \R^n\to\R: f \text{ is measurable},  \Vert f\Vert_p < \infty \Big\}.\]
Here and below, when we use measurability and related notions, we refer to  the $n$-dimensional Lebesgue measure on $\R^n$.

For $0<s<1$ and $p\ge 1$, we define the fractional  Sobolev space  $W^{s,p}(\R^n)$ as
	\[W^{s,p}(\R^n) = \Big\{f \in L^p(\R^n) :  \int_{\R^n}\int_{\R^n} \frac{|f(x)-f(y)|^p}{|x-y|^{n+p s}} \d x \d y < \infty \Big\}.\]
For $p\ge 1$, we set
	\[W^{1,p}(\R^n) = \big\{f \in L^p(\R^n) :  \vert \nabla f \vert \in L^p(\R^n)\big\},\]
where $\nabla f$ is the weak gradient of $f$.	
	
\subsection{Symmetrization} 

For  a set $E\subset \R^n$, the indicator function $\chi_E$ is defined by $\chi_E(x)=1$ for $x\in E$ and $\chi_E(x)=0$ otherwise.
Let $E \subseteq \R^n$ be a Borel set of finite measure.  
The Schwarz symmetral of $E$, denoted by $E^\star$, is the closed centered Euclidean ball with same volume as $E$.

Let $f:\R^n\to\R$ be a non-negative measurable function with super-level sets $\{f \geq t\}$ of finite measure.
The layer cake formula states that
\begin{equation}\label{eq_layer_cake}
f(x) = \int_0^\infty \chi_{\{f\geq t\}}(x) \d t
\end{equation}
for almost every $x \in \R^n$ and allows us to recover the function from its super-level sets.
The Schwarz symmetral of $f$, denoted by $f^\star$, is defined by
\[f^\star(x) = \int_0^\infty \chi_{\{f\geq t\}^\star}(x) \d t\]
for $x\in\R^n$.
Hence, $f^\star$ is determined by the properties of being radially symmetric, decreasing and having super-level sets of the same measure as those of $f$. Note that $f^\star$ is also called the symmetric decreasing rearrangement of $f$. 

\goodbreak
The proofs of our results make use of the Riesz rearrangement inequality, which is stated in full generality, for example, in \cite{BLL}. 

\begin{theorem}[Riesz's rearrangement inequality]\label{thm_BLL}
For $f,g,k:\R^n \to \R$ non-negative, measurable functions with super-level sets of finite measure,
    \[ \int_{\R^n}\int_{\R^n}  f(x) k(x-y) g(y) \d x \d y \leq \int_{\R^n}\int_{\R^n}  f^\star(x) k^\star(x-y) g^\star(y) \d x \d y. \]
\end{theorem}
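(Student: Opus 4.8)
The plan is to reduce the inequality to the case of indicator functions and then to treat that case by Steiner symmetrization. First I would apply the layer-cake formula \eqref{eq_layer_cake} to each of $f$, $g$, $k$ and use Tonelli's theorem (all integrands are non-negative) to obtain
\[
\int_{\R^n}\!\int_{\R^n}\! f(x)k(x-y)g(y)\d x\d y = \int_0^\infty\!\!\int_0^\infty\!\!\int_0^\infty\! I\big(\{f\ge r\},\{k\ge s\},\{g\ge t\}\big)\d r\d s\d t,
\]
where $I(A,C,B):=\int_{\R^n}\int_{\R^n}\chi_A(x)\,\chi_C(x-y)\,\chi_B(y)\d x\d y$ and, for $r,s,t>0$, all sets occurring on the right have finite measure by hypothesis. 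The same identity holds for $f^\star,g^\star,k^\star$, and since $f^\star$ is radially decreasing with super-level sets of the same measure as those of $f$ one has $\{f^\star\ge r\}=\{f\ge r\}^\star$, and similarly for $g^\star$, $k^\star$. Hence it suffices to prove the three-set inequality $I(A,C,B)\le I(A^\star,C^\star,B^\star)$ for all Borel sets $A,B,C\subset\R^n$ of finite measure.

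To prove the three-set inequality I would show that Steiner symmetrization does not decrease $I$ while preserving volumes. Fix $e\in\sn$ and choose coordinates $x=(\bar x,u)$, $y=(\bar y,v)$ with $u$, $v$ the components along $e$. Slicing and using Tonelli, for each fixed $\bar x,\bar y\in\R^{n-1}$ the inner integral over $(u,v)$ is a one-dimensional integral $\int_\R\int_\R \chi_{A_{\bar x}}(u)\,\chi_{C_{\bar x-\bar y}}(u-v)\,\chi_{B_{\bar y}}(v)\d u\d v$ of the slices $A_{\bar x},B_{\bar y},C_{\bar x-\bar y}\subset\R$ (which are of finite measure for a.e.\ $\bar x,\bar y$). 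The one-dimensional Riesz inequality says that this integral does not decrease when each slice is replaced by the centered interval of the same length; since that centered interval is precisely the slice of $S_eA$, respectively $S_eC$, $S_eB$, integrating over $\bar x,\bar y$ gives $I(A,C,B)\le I(S_eA,S_eC,S_eB)$, while Fubini gives $|S_eA|=|A|$ and likewise for $B$, $C$.

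The one-dimensional statement would be proved by inner regularity and approximation: reduce to the case where $A$, $B$, $C$ are finite unions of open intervals (the functional is Lipschitz in each argument with respect to symmetric difference, the constant being controlled by the measures of the other two sets, so the inequality passes to the limit), and for finite unions of intervals run the classical argument that sliding the constituent intervals of each set towards a common center never decreases the integral. Finally I would iterate Steiner symmetrizations: one can choose a single sequence of directions $e_1,e_2,\dots\in\sn$ (for instance drawn cyclically from a fixed countable dense set) so that the iterated symmetrals $A_m:=S_{e_m}\cdots S_{e_1}A$ satisfy $|A_m\triangle A^\star|\to 0$, and simultaneously $|B_m\triangle B^\star|\to 0$ and $|C_m\triangle C^\star|\to 0$. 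After first truncating $A$, $B$, $C$ to bounded sets (again by approximation), the functional $I$ is monotone along this process and continuous under symmetric-difference convergence, so $I(A,C,B)\le\lim_m I(A_m,C_m,B_m)=I(A^\star,C^\star,B^\star)$, which completes the proof.

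I expect the two symmetrization ingredients to be the real obstacles rather than the layer-cake bookkeeping: establishing the one-dimensional base case cleanly (the sliding-intervals argument, or equivalently a bathtub-principle argument applied to the convolution $\chi_A\ast\chi_{-C}$ tested against $\chi_B$), and verifying that iterated Steiner symmetrizations of an arbitrary measurable set converge in measure to its Schwarz symmetral with a universal sequence of directions — the point where one must be careful both about that choice of directions and about the reduction to bounded sets that makes $I$ genuinely continuous in the limit.
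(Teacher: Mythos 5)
The paper itself offers no proof of Theorem \ref{thm_BLL}: the inequality is quoted from Brascamp--Lieb--Luttinger \cite{BLL}, so you are reconstructing an argument where the authors only cite. Your reduction machinery is sound and is the classical route: the layer-cake/Tonelli reduction to the three-set inequality $I(A,C,B)\le I(A^\star,C^\star,B^\star)$ is correct (using that $\{f^\star\ge r\}$ and $\{f\ge r\}^\star$ agree up to null sets), the Lipschitz bound $|I(A,C,B)-I(A',C,B)|\le \min\{|B|,|C|\}\,|A\triangle A'|$ legitimizes your approximation and truncation steps, and the slicing argument giving $I(A,C,B)\le I(S_eA,S_eC,S_eB)$ from the one-dimensional Riesz inequality is the standard Steiner-symmetrization step (the one-dimensional base case via sliding intervals is classical, going back to Riesz, though it still has to be written out).

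The genuine gap is the final convergence claim, which you yourself flag: that iterated Steiner symmetrals along directions ``drawn cyclically from a fixed countable dense set'' converge in symmetric difference to the Schwarz symmetral is not a theorem you may assume. Convergence of Steiner iterates is known to depend on the order in which directions are used, not merely on their density: cycling through finitely many directions yields only a limit symmetric under the generated reflection group (Klain), and there are dense sequences of directions along which the iterates of some sets do not converge to a ball, so your parenthetical choice is unjustified as stated. The standard ways to close this step are either (i) invoking a genuine universal-sequence result --- for instance, i.i.d.\ random directions symmetrize every set of finite measure almost surely (Vol\v{c}i\v{c}), which combined with the non-expansivity of Steiner symmetrization in the symmetric-difference metric yields one deterministic sequence working for all sets, and in particular simultaneously for $A$, $B$, $C$, as your argument requires --- or (ii) the Lieb--Loss argument, where the sequence of symmetrizations (interlaced with fixed rotations) is adapted to the three sets and the limit is identified as a ball by a compactness and invariance argument rather than by a general convergence theorem. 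With that ingredient supplied, your proof is the standard textbook proof of the three-function Riesz inequality; without it, the last step does not stand.
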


We will  use the characterization of equality cases of the Riesz rearrangement inequality due to Burchard  \cite{Burchard96}.

\begin{theorem}[Burchard]\label{thm_burchard}
    Let $A,B$ and $C$ be sets of finite positive measure in $\,\R^n$ and denote by $\alpha, \beta$ and $\gamma$ the radii of their Schwarz symmetrals $A^\star, B^\star$ and $C^\star$.
    For $\,\vert\alpha - \beta\vert < \gamma < \alpha+\beta$, there is equality in
    \[ \int_{\R^n}\int_{\R^n} \chi_A(y) \chi_B(x-y) \chi_C(x) \d x \d y \leq  \int_{\R^n}\int_{\R^n} \chi_{A^\star}(y) \chi_{B^\star}(x-y) \chi_{C^\star}(x) \d x \d y \]
    if and only if, up to sets of measure zero,
    \[A=a+\alpha D,\, B = b+\beta D,\, C = c+\gamma D,\]
    where $D$ is a centered ellipsoid, and $a,b$ and $c=a+b$ are vectors in $\R^n$.
\end{theorem}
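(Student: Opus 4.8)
\noindent\emph{Proof proposal.} Write $R(A,B,C)=\int_{\R^n}\int_{\R^n}\chi_A(y)\,\chi_B(x-y)\,\chi_C(x)\d x\d y$ for the trilinear functional in Theorem~\ref{thm_BLL}, so that $R(A,B,C)=\int_{\R^n}(\chi_A\ast\chi_{-B})\,\chi_C$. The plan is to obtain the rigidity through Steiner symmetrization, since Riesz's inequality itself is proved by it: for every direction $e\in\sn$, replacing $A,B,C$ by their common Steiner symmetrals $S_eA,S_eB,S_eC$ does not decrease $R$, and a suitable countable sequence of such symmetrizations carries $(A,B,C)$ to $(A^\star,B^\star,C^\star)$ up to null sets. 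Hence, if $R(A,B,C)=R(A^\star,B^\star,C^\star)$, then already $R(S_eA,S_eB,S_eC)=R(A,B,C)$ for \emph{every} $e\in\sn$. So there are two tasks: characterise equality in a single Steiner symmetrization, and then combine the structural information obtained over all directions. The hypothesis $|\alpha-\beta|<\gamma<\alpha+\beta$ is used throughout to push the analysis into the rigid regime.

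\emph{Step 1 (one-dimensional core).} Fix $e=e_n$ and write $x=(x',t)\in\R^{n-1}\times\R$. The defect $R(S_eA,S_eB,S_eC)-R(A,B,C)\ge 0$ is an integral over $(x',y')\in(\R^{n-1})^2$ of the defect in the one-dimensional Riesz inequality for the fibres $A_{y'},B_{x'-y'},C_{x'}\subset\R$, so the key computation is the rigidity in dimension one: if $A_0,B_0,C_0\subset\R$ have lengths $a,b,c$ with $|a-b|<c<a+b$, then $R(A_0,B_0,C_0)=R(A_0^\star,B_0^\star,C_0^\star)$ iff $A_0,B_0,C_0$ are intervals with $\operatorname{mid}(C_0)=\operatorname{mid}(A_0)+\operatorname{mid}(B_0)$. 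This follows by writing $R(A_0,B_0,C_0)=\int_0^c g^\ast$ with $g=\chi_{A_0}\ast\chi_{-B_0}$: a bathtub/layer-cake argument shows $\int_0^c g^\ast$ is attained when $C_0$ is a superlevel set of $g$, and the one-dimensional rearrangement inequality shows $g^\ast$, hence $\int_0^c g^\ast$, is maximal exactly when $g$ is the ``tent'' $\chi_{[-a/2,a/2]}\ast\chi_{[-b/2,b/2]}$, i.e.\ when $A_0,B_0$ are intervals; the strict inequalities force the maximising $C_0$ to be the interval of length $c$ about the apex of $g$. Transplanting this fibrewise and running a measurable-selection and Pexider/Cauchy-equation argument on the fibre midpoints yields: for a.e.\ line $\ell$ parallel to $e$ (with non-degenerate fibres) the sections $A\cap\ell,B\cap\ell,C\cap\ell$ are intervals, the midpoint functions $x'\mapsto\operatorname{mid}(A_{x'})$, etc., are affine on the respective (convex) projections, and $\operatorname{mid}(C_{\,\cdot\,})=\operatorname{mid}(A_{\,\cdot\,})+\operatorname{mid}(B_{\,\cdot\,})$; in parallel the fibre-length profiles satisfy the quantitative extremality relation coming from the intermediate regime of the one-dimensional problem.

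\emph{Step 2 (all directions $\Rightarrow$ ellipsoids).} Running Step~1 over all $e\in\sn$, almost every line section of $A$, $B$ and $C$, in every direction, is an interval; by the measure-theoretic characterisation of (essential) convexity, each of $A,B,C$ coincides, up to a null set, with a convex body, and we replace them by these bodies. For each direction the midpoints of all chords of $A$ parallel to it now lie on a hyperplane, and likewise for $B$ and $C$; a convex body with this property in \emph{every} direction is an ellipsoid. Thus $A=a+E_A$, $B=b+E_B$, $C=c+E_C$ with $E_A,E_B,E_C$ centred ellipsoids, and the additivity of the midpoint functions gives $c=a+b$. It remains to see that the three ellipsoids are homothetic with dilation factors $\alpha,\beta,\gamma$. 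Here one feeds the quantitative fibre-profile relation of Step~1 (equivalently: runs the symmetrization rigidity once more in the now-convex setting, or inducts on $n$ via the hyperplane projections, whose triple is itself an equality case in dimension $n-1$ with base case $n=1$ given by Step~1) to conclude $E_A=\alpha D$, $E_B=\beta D$, $E_C=\gamma D$ for a common centred ellipsoid $D$. Conversely, any configuration $A=a+\alpha D$, $B=b+\beta D$, $C=c+\gamma D$ with $c=a+b$ gives equality: taking $T\in\gln$ with $\det T=1$ and $TD$ a centred ball, the substitution $x\mapsto Tx$, $y\mapsto Ty$ (contributing $(\det T)^2=1$) reduces to concentric dilates of a ball, where equality is immediate because $\chi_A\ast\chi_{-B}$ is centred at $a+b=c$ and $C$ is precisely the corresponding superlevel set; the requirement $c=a+b$ is exactly what makes the translations cancel, and $A^\star$, $B^\star$, $C^\star$ are unchanged by $T$. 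Hence these are exactly the extremisers.

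\emph{Main obstacle.} The delicate point is the interplay, in Steps~1--2, between the rigid fibre regime $|a-b|<c<a+b$ and the degenerate regimes (one fibre length at least the sum, or at most the difference, of the other two), which do occur on sets of positive measure even when $A=B=C$ is a ball; one must show that the \emph{global} condition $|\alpha-\beta|<\gamma<\alpha+\beta$ nonetheless forces enough rigidity — on a set of $(x',y')$ of positive measure and in enough directions — to conclude intervals and affine midpoints, and that the equality case genuinely descends so that the induction/homothety step closes. The emergence of a general ellipsoid rather than merely a ball is precisely the $\gln$-covariance $R(TA,TB,TC)=(\det T)^2 R(A,B,C)$ combined with the invariance of Schwarz symmetrization under volume-preserving linear maps.
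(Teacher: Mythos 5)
This statement is not proved in the paper at all: it is quoted verbatim as Burchard's theorem and attributed to \cite{Burchard96}, so there is no in-paper argument to compare yours with; the relevant benchmark is Burchard's original (and lengthy) proof. Measured against that, your proposal is an outline of the right general strategy — equality does propagate to every Steiner symmetrization (since $R(A,B,C)\le R(S_eA,S_eB,S_eC)\le R(A^\star,B^\star,C^\star)$ and Steiner symmetrization preserves the Schwarz symmetrals), the sufficiency direction via a volume-preserving linear map and translation cancellation with $c=a+b$ is correct, and the classical fact that a convex body whose chord midpoints in every direction lie in a hyperplane is an ellipsoid is legitimately usable — but it has a genuine gap exactly at the point you label ``Main obstacle,'' and that point is the theorem's actual content, not a technicality to be deferred.

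Concretely: the fiberwise reduction only yields one-dimensional equality information, and the one-dimensional rigidity (intervals with additive midpoints) is available only for fiber triples whose lengths satisfy the strict inequalities $|a-b|<c<a+b$. Even for the extremal configuration of three concentric balls, the set of $(x',y')$ whose fibers are degenerate has positive measure, so ``almost every line section is an interval'' and the affinity/additivity of the midpoint functions do not follow from what you have written; the global hypothesis $|\alpha-\beta|<\gamma<\alpha+\beta$ does not by itself localize to the fibers. Burchard's proof is long precisely because it must extract rigidity despite these degenerate fibers (and must also justify the measurable-selection and Cauchy-equation step for the midpoints, and close the homothety/induction step identifying a common ellipsoid $D$ with the prescribed radii), none of which your sketch supplies. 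As it stands, the proposal establishes the easy implication and a plan for the hard one, but the hard implication remains unproved; for the purposes of this paper the correct move is simply to cite \cite{Burchard96}, as the authors do.
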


\subsection{Star-shaped sets and star bodies}
A set $K \subseteq \R^n$ is star-shaped (with respect to the origin), if the interval $[0,x]\subset K$ for every $x\in K$. 
The gauge function $\|\cdot\|_K : \R^n \to [0,\infty]$ of a star-shaped set is defined as
    \[\|x\|_K = \inf\{ \lambda > 0 : x \in \lambda K\},\]
and the radial function $\rho_K:\R^n \setminus\{0\} \to [0,\infty]$ as
    \[\rho_K(x) = \|x\|_K^{-1} = \sup\{\lambda \geq 0:\lambda x \in K\}.\]
The $n$-dimensional Lebesgue measure or volume of a star-shaped set $K$ in $\R^n$ with measurable radial function is given by
    \[\vol{K}= \frac1 n \int_{\sn} \rho_K(\xi)^n\d \xi.\]
We call a star-shaped set $K\subset\R^n$ a star body if its radial function is strictly positive and continuous in $\R^n \setminus \{0\}$. On the set of star bodies, the $q$-radial sum for $q\ne0$ of $K,L\subset\R^n$ is defined by
\begin{equation}
   \rho^q(K\ra{q} L,\xi)= \rho^q(K,\xi)+\rho^q(L,\xi)
\end{equation}
for $\xi\in\sn$ (cf.~\cite[Section 9.3]{Schneider:CB2}).  The dual Brunn--Minkowski inequality  (cf.~\cite[(9.41)]{Schneider:CB2}) states that for star bodies $K,L\subset\R^n$ and $q>0$,
\begin{align}\label{eq_dualBM}
    \vol{K\ra{-q} L}^{-q/n} 
    \ge  \vol{K}^{-q/n} +\vol{L}^{-q/n},
\end{align}
with equality precisely if $K$ and $L$ are dilates, that is, there is $\lambda>0$ such that $K=\lambda L$.

\goodbreak
Let $\alpha\in\R\backslash\{0,n\}$. For star-shaped sets $K,L \subseteq \R^n$  with measurable radial functions, the dual mixed volume is defined as
    \[\tilde V_\alpha(K,L) = \frac 1n \int_{\sn} \rho_K(\xi)^{n-\alpha} \rho_L(\xi)^{\alpha} \d \xi.\]
Notice that 
    \[\tilde V_\alpha(K,K) = \vol{K}\]
and that
\[\tilde V_\alpha(K,L_1\ra{\alpha} L_2)=\tilde V_\alpha(K, L_1)+\tilde V_\alpha(K,L_2)\]
for star-shaped sets $K, L_1, L_2\subseteq \R^n$ with measurable radial functions.

For star-shaped sets $K,L \subseteq \R^n$ of finite volume and $0<\alpha<n$, the dual mixed volume inequality states that 
\begin{equation}\label{eq_mixedvolume}
    \tilde V_{\alpha}(K,L) \leq \vol{K}^{({n-\alpha})/n} \vol{L}^{ \alpha/n}.
\end{equation}
Equality holds if and only if $K$ and $L$ are dilates, where we say that star-shaped sets $K$ and $L$ are dilates if $\rho_K=\lambda\,\rho_L$ almost everywhere on $\sn$ for some $\lambda> 0$. The definition of dual mixed volume for star bodies is due to Lutwak \cite{Lutwak75}, where also the dual mixed volume inequality is derived from H\"older's inequality (also see \cite[Section~9.3]{Schneider:CB2} or \cite[B.29]{Gardner}). 

\subsection{Anisotropic fractional Sobolev norms}

Let $0<s<1$ and $p\ge 1$. For $K\subset \R^n$ a star body and $f \in W^{s,p}(\R^n)$, the anisotropic fractional $L^p$ Sobolev norm of $f$ with respect to $K$ is 
\begin{equation}\label{eq_aniso_norm}
    \int_{\R^n}\int_{\R^n} \frac{\vert f(x)-f(y)\vert^p}{\Vert x-y\Vert_K^{n+p s}} \d x\d y.
\end{equation}
It was introduced in \cite{Ludwig:fracnorm} for $K$ a convex body (also, see \cite{Ludwig:fracperi}). For $K=\B$, the Euclidean unit ball, we obtain the classical $s$-fractional $L^p$ Sobolev norm of $f$. The limit as $s\to1^-$ was determined in \cite{BourgainBrezisMironescu} in the Euclidean case and in \cite{Ludwig:fracnorm} in the anisotropic case. We will also consider the following asymmetric versions of \eqref{eq_aniso_norm},
    \[\int_{\R^n}\int_{\R^n} \frac{\plus{ f(x)-f(y)}p}{\Vert x-y\Vert_K^{n+p s}} \d x\d y, \quad
    \int_{\R^n}\int_{\R^n} \frac{\minus{ f(x)-f(y)}p}{\Vert x-y\Vert_K^{n+p s}} \d x\d y,\]
where $\pl a = \max\{a,0\}$  and $\mn a = \max\{-a, 0\}$ for $a\in\R$. The limits as $s\to1^-$ were determined in \cite{Ma2014}.

\subsection{$L^p$ polar projection bodies}\label{sec_lp_polar}

For $p\ge 1$ and $f \in W^{1,p}(\R^n)$, the $L^p$ polar projection body is defined  as the star body with gauge function given by
\begin{equation}
    \Vert \xi\Vert_{\opp p f}^p  = \int_{\R^n}  \vert\langle \nabla f(x),\xi \rangle\vert^p \d x
\end{equation}
for $\xi\in\sn$, were $\langle \cdot ,\cdot\rangle$ denotes the inner product. It is the polar body of a convex body. The definition is due to Lutwak, Yang, and Zhang \cite{LYZ2002b}. For a convex body $K\subset \R^n$, they defined the $L^p$ polar projection body (with a different normalization) in \cite{LYZ2000} by
\begin{equation}\label{eq_defop}
    \Vert \xi\Vert_{\opp p K}^p  = \int_{\sn} \vert \langle \xi, \eta\rangle\vert^p \d S_p(K,\eta),
\end{equation}
where $S_p(K,\cdot)$ is the $L^p$ surface area measure of $K$ (for the definition of $L^p$ surface area measures, see, for example, \cite[Section~9.1]{Schneider:CB2}).

Asymmetric $L^p$ polar projection bodies of convex bodies were introduced in \cite{Ludwig:Minkowski}. For $f \in W^{1,p}(\R^n)$, the asymmetric $L^p$ polar projection bodies of $f$ are defined  as the star bodies with gauge function given by 
\begin{equation}
    \Vert \xi\Vert_{\appm p f}^p  = \int_{\R^n}  \plmn{\langle \nabla f(x),\xi \rangle}^p \d x
\end{equation}
for $\xi\in\sn$.

\section{Fractional $L^p$ Polar Projection Bodies}
\label{sec_fracprojectionbody}
Let $0<s<1$ and $1< p  <n/s$.
For $f \in W^{s,p}(\R^n)$, define the $s$-fractional $L^p$~polar projection body $\fpp s p f$ as the star-shaped set given by the gauge function
\begin{equation}\label{eq_fpp_def}
	\Vert \xi\Vert_{\fpp s p f}^{p s} = \int_0^\infty t^{-p s-1} \int_{\R^n} \left| f(x+t\xi)-f(x)\right|^p \d x \d t
\end{equation}
for $\xi\in\R^n$. Note that $\Vert \cdot\Vert_{\fpp s p f}$ is a one-homogeneous function on $\R^n$. 

Let $K\subset\R^n$ be a star body. The following simple calculation turns out to be useful. For $f\in W^{s,p}(\R^n)$,
\begin{align}
	\int_{\R^n} \int_{\R^n} &\frac{|f(x)-f(y)|^p}{\|x-y\|_K^{n+p s}} \d x \d y\\
	&= \int_{\R^n} \int_{\R^n} \frac{|f(y+z)-f(y)|^p}{\|z\|_K^{n+p s}} \d z \d y\\
	&= \int_{\sn} \int_0^\infty {\|t \xi\|_K^{-n-p s}} \int_{\R^n} {|f(y+t\xi)-f(y)|^p}\,t^{n-1}  \d y \d t \d\xi\\
	&= \int_{\sn} \int_0^\infty {\|\xi\|_K^{-n-p s}} t^{-p s-n} \left\|f(\cdot+t\xi) - f\right\|_{p}^p t^{n-1} \d t \d\xi\\
	&= \int_{\sn} \rho_K(\xi)^{n+p s} \int_0^\infty t^{-p s-1} \left\|f(\cdot+t\xi) - f\right\|_{p}^p \d t \d\xi\\
	&= \int_{\sn} \rho_K(\xi)^{n+p s} \rho_{\fpp s p f}(\xi)^{-p s} d\xi.
\end{align}
Hence,
\begin{equation}\label{eq_dualmixed_func}
	\int_{\R^n} \int_{\R^n} \frac{|f(x)-f(y)|^p}{\|x-y\|_K^{n+p s}} \d x \d y = n\, \tilde V_{-p s}(K, \fpp s p f)
\end{equation}
in this case.

Next, we establish basic properties of fractional $L^p$ polar projection bodies.

\begin{proposition}\label{prop_properties}
	For non-zero $f \in W^{s,p}(\R^n)$, the set $\,\fpp s p f$ is an origin-symmetric star body with the origin in its interior.
	Moreover, there is $c>0$ depending only on $f$ and $p$ such that $\fpp s p f \subseteq c\, \B$ for every $s \in (0,1)$.
\end{proposition}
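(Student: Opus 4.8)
\emph{Proof proposal.}
Write $w(h)=\Vert f(\cdot+h)-f\Vert_p$ for $h\in\R^n$, so that $\Vert\xi\Vert_{\fpp s p f}^{p s}=\int_0^\infty t^{-p s-1}w(t\xi)^p\,\d t$. The plan is to deduce everything from three elementary facts about $w$: the substitution $y=x-h$ gives $w(-h)=w(h)$; continuity of translations in $L^p$ gives that $w$ is continuous on $\R^n$; and the triangle inequality in $L^p$ together with translation invariance gives the subadditivity $w(h_1+h_2)\le w(h_1)+w(h_2)$. Evenness of $w$ makes the gauge $\Vert\cdot\Vert_{\fpp s p f}$ even, so $\fpp s p f$ is origin-symmetric.

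The technical core is a mean-value bound coming from subadditivity: from $w(v)\le w(v-h)+w(h)$, raising to the $p$-th power and averaging $h$ over the ball $B(v/2,|v|/2)$ — which is invariant under $h\mapsto v-h$ and contained in $B(0,|v|)$ — one gets $w(v)^p\le C_n\,2^{p}|v|^{-n}\int_{B(0,|v|)}w(h)^p\,\d h$ for $v\ne0$ and a dimensional constant $C_n$. Taking $v=t\xi$ with $\xi\in\sn$, multiplying by $t^{-p s-1}$, integrating over $(0,1)$ and using Tonelli (the inner radial integral produces the factor $|h|^{-n-p s}/(n+p s)$) yields
\begin{align*}
\int_0^1 t^{-p s-1}w(t\xi)^p\,\d t
&\le\frac{C_n\,2^p}{n+p s}\int_{\R^n}\frac{w(h)^p}{|h|^{n+p s}}\,\d h\\
&=\frac{C_n\,2^p}{n+p s}\int_{\R^n}\int_{\R^n}\frac{|f(x)-f(y)|^p}{|x-y|^{n+p s}}\,\d x\,\d y,
\end{align*}
which is finite since $f\in W^{s,p}(\R^n)$; and $\int_1^\infty t^{-p s-1}w(t\xi)^p\,\d t\le(2\Vert f\Vert_p)^p/(p s)<\infty$ trivially. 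Hence $\Vert\xi\Vert_{\fpp s p f}<\infty$ for every $\xi$, i.e.\ $\rho_{\fpp s p f}>0$ on $\R^n\setminus\{0\}$. Conversely, $\Vert\xi\Vert_{\fpp s p f}=0$ would force $w(t\xi)=0$ for a.e.\ $t>0$, making $f$ periodic in the direction $\xi$ and hence $0$, contradicting that $f$ is non-zero; so $\rho_{\fpp s p f}<\infty$ there as well. Continuity of $\rho_{\fpp s p f}$ follows by dominated convergence along $\xi_k\to\xi$ in $\sn$: the mean-value bound together with $w\le2\Vert f\Vert_p$ supplies one majorant in $L^1((0,\infty),\,t^{-p s-1}\,\d t)$ valid for all unit vectors, while $w(t\xi_k)\to w(t\xi)$ pointwise. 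Thus $\rho_{\fpp s p f}$ is finite, positive, and continuous on $\R^n\setminus\{0\}$, so $\fpp s p f$ is an origin-symmetric star body; by compactness of $\sn$, $\rho_{\fpp s p f}$ is bounded there and bounded away from $0$, so $\fpp s p f$ is bounded and has the origin in its interior.

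For the uniform inclusion I would use that $w(h)^p\to2\Vert f\Vert_p^p$ as $|h|\to\infty$, uniformly in the direction $h/|h|$: approximating $f$ in $L^p$ by a compactly supported function, the masses of $f$ and $f(\cdot+h)$ become asymptotically disjoint once $|h|$ exceeds a radius depending only on the approximation, not on the direction. So there is $T=T(f,p)>0$ with $w(t\xi)^p\ge\Vert f\Vert_p^p$ for all $t\ge T$ and all $\xi\in\sn$, whence
\[\Vert\xi\Vert_{\fpp s p f}^{p s}\ge\Vert f\Vert_p^p\int_T^\infty t^{-p s-1}\,\d t=\frac{\Vert f\Vert_p^p\,T^{-p s}}{p s}.\]
Taking $1/(p s)$-th powers and using $(T^{-p s})^{1/(p s)}=T^{-1}$ gives $\Vert\xi\Vert_{\fpp s p f}\ge T^{-1}\big(\Vert f\Vert_p^p/(p s)\big)^{1/(p s)}\ge T^{-1}\inf_{u>0}\big(\Vert f\Vert_p^p/u\big)^{1/u}$, and an elementary one-variable computation gives $\inf_{u>0}(a/u)^{1/u}=e^{-1/(ea)}>0$ for every $a>0$. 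Hence $\Vert\xi\Vert_{\fpp s p f}$ is bounded below on $\sn$ by a constant depending only on $f$ and $p$ and independent of $s$, i.e.\ $\fpp s p f\subseteq c\,\B$.

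I expect the delicate step to be this last uniform bound. Bounding the gauge from below by restricting the defining integral to a fixed interval $[a,b]\subset(0,\infty)$ cannot work, since raising the resulting constant to the power $1/(ps)$ collapses to $0$ as $s\to0^+$; one must instead use the tail near $t=\infty$, where $w(t\xi)^p\to2\Vert f\Vert_p^p>0$ and the divergence $\int_T^\infty t^{-ps-1}\,\d t\sim 1/(ps)$ is precisely what survives the $1/(ps)$-th root. This forces one to check that the separation of the translates of $f$ is uniform over directions and to carry out the (slightly unusual) minimization of $u\mapsto(a/u)^{1/u}$. The subadditivity–averaging estimate, though routine in spirit, is the other place where a little care is needed.
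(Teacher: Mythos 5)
Your argument is correct, and on the star-body part it takes a genuinely different route from the paper. For the uniform inclusion $\fpp s p f\subseteq c\,\B$ the two proofs share the same mechanism: a direction-uniform lower bound $\Vert f(\cdot+t\xi)-f\Vert_p\geq c_f\,\Vert f\Vert_p$ for all large $t$ (the paper gets the factor $\tfrac13$ by concentrating the $L^p$-mass of $f$ in a ball $r\B$, you get it by approximating with a compactly supported function), inserted into the divergent tail $\int_T^\infty t^{-p s-1}\d t=T^{-p s}/(p s)$; your explicit minimization of $u\mapsto(a/u)^{1/u}$ makes the $s$-uniformity of the resulting constant fully transparent, a point the paper treats rather tersely (passing from a lower bound on $\Vert\xi\Vert_{\fpp s p f}^{p s}$ to an $s$-independent ball does require keeping the factor $1/(p s)$, exactly as you do). Where you genuinely diverge is in showing that $\fpp s p f$ is a star body with the origin in its interior: the paper uses the sphere average \eqref{eq_finite} to find a positive-measure set of directions containing a basis on which the gauge is finite, upgrades this to $\Vert x\Vert_{\fpp s p f}\le d\,|x|$ via the quasi-subadditivity \eqref{eq_sublinearity}, and then proves continuity separately through the H\"older-trick inequalities \eqref{eq_quasiineq_sup} and \eqref{eq_quasiineq_inf}; you instead derive the averaging bound $w(v)^p\le C_n 2^p|v|^{-n}\int_{B(0,|v|)}w(h)^p\d h$ from subadditivity of $w(h)=\Vert f(\cdot+h)-f\Vert_p$, which simultaneously yields a direction-uniform bound on $\Vert\xi\Vert_{\fpp s p f}^{p s}$ over $\sn$ in terms of the $W^{s,p}$-seminorm (hence the origin is interior, without selecting a basis of good directions) and a single majorant in $L^1\big((0,\infty),t^{-p s-1}\d t\big)$ valid for all unit vectors, so that continuity of the gauge follows from continuity of translations in $L^p$ and dominated convergence. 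Your route is shorter and more quantitative about direction-uniformity; what the paper's quasi-triangle inequalities buy in exchange is an explicit modulus-of-continuity-type estimate for $\Vert\cdot\Vert_{\fpp s p f}$ rather than bare continuity.
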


\begin{proof}
First, note that since for $\xi\in\R^n$ and $t>0$,
	\[\int_{\R^n}\vert {f(x -t\xi)-f(x)}\vert^p \d x=\int_{\R^n}\vert f(x)-f(x+t\xi)\vert^p \d x,\]
the set $\fpp s p f$ is origin-symmetric.

Next, we show that $\fpp s p f$ is bounded.
We take $r>1$ large enough so that $\|f\|_{L^p(r\B)} \geq \frac 23 \|f\|_p$ and easily see that for $t>2r$,
\begin{align}
	\Vert f(\cdot+t\xi) -  f(\cdot)\Vert_p 
	&\geq  \|f(\cdot+t\xi) - f(\cdot)\|_{L^p(r\B-t\xi)}\\
	&=  \|f(\cdot) - f(\cdot-t\xi)\|_{L^p(r\B)}\\
	&\geq  \|f\|_{L^p(r\B)} - \|f(\cdot - t\xi)\|_{L^p(r\B)}\\
	&\geq  \frac 23\|f\|_p - \frac 13 \|f\|_p. 
\end{align}
Hence,
\begin{align}
	\int_0^\infty t^{-p s-1} \int_{\R^n} \big| {f(x+t\xi)-f(x)} \big|^p \d x \d t
	\geq \frac {\|f\|_p^p} {3^p}\, \, \int_r^\infty t^{-p s-1} \d t
	\geq \frac {\|f\|_p^p} {3^p} \, \frac {r^{-p s}}{p s} \geq c,
\end{align}
which implies that $\fpp s p f \subseteq c\, \B$ for $c>0$ independent of $s$.

\goodbreak
Now, we show that $\fpp s p f$ has the origin in its interior.
First observe that for $\xi, \eta \in \R^n$, by the triangle inequality and a change of variables,
\begin{align}
	\Vert \xi+&\eta\Vert _{\fpp s p f}^{p s}\\
	&= \int_0^\infty \!t^{-p s-1} \|f(\cdot+t\xi+t\eta)-f(\cdot)\|_p^p \d t \\
	&\leq \int_0^\infty \!t^{-p s-1} \left( \|f(\cdot+t\xi+t\eta)-f(\cdot+t\xi)\|_p + \|f(\cdot+t\xi)-f(\cdot)\|_p\right)^p \d t \label{eq_sublinearity}\\
	&\leq \int_0^\infty \!t^{- p s-1} 2^{p-1} (\|f(\cdot+t\eta)-f(\cdot)\|_p^p + \|f(\cdot+t\xi)-f(\cdot)\|_p^p) \d t\\
	&= 2^{p-1} \|\xi\|_{\fpp s p f}^{p s} + 2^{p-1} \|\eta\|_{\fpp s p f}^{p s}. 
\end{align}
Using the relation \eqref{eq_dualmixed_func} with $K=\B$, we get
\begin{equation}\label{eq_finite}
    \int_{\sn} \Vert\xi\Vert_{\fpp s p f}^{ p s} \d\xi
     = \frac 1n \int_{\R^n}\int_{\R^n} \frac{|f(x)-f(y)|^p}{|x-y|^{n+p s}} \d x \d y,
\end{equation}
which is finite since $f\in W^{s,p}(\R^n)$.
We choose $r>0$ large enough so that the set $A = \{\xi \in \sn: \Vert\xi\Vert_{\fpp s p f}^{s} < r\}$ has positive $(n-1)$-dimensional Hausdorff measure and contains a basis $\{\xi_1, \ldots, \xi_n\} \subseteq A$ of $\R^n$.
Applying (if necessary) a linear transformation to $\fpp s p f$ , we may assume without loss of generality that $\xi_i = e_i$ are the canonical basis vectors.
For every $x \in \R^n$, writing $x = \sum x_i e_i$ and using \eqref{eq_sublinearity}, we get
\begin{equation}\label{eq_euclideanbound}
	\|x\|_{\fpp s p f} \leq \Big( 2^{n(p-1)} \sum_{i=1}^n |x_i|^{p s} \|e_i\|_{\fpp s p f}^{p s} \Big)^{\frac 1{p s}} \leq d\, |x|,
\end{equation}
where $d>0$ is independent of $x$.
This shows that $\fpp s p f$ has the origin as interior point.

Finally, we show that $\| \cdot \|_{\fpp s p f}$ is continuous.
For $\xi, \eta \in \R^n$, by the triangle inequality and \eqref{eq_euclideanbound}, we have
\begin{align}
    \|\xi + &\eta\|_{\fpp s p f}^{p s}\\[2pt]
    &= \int_0^\infty t^{-1- p s} \|f(\cdot + t \xi + t \eta) - f(\cdot)\|_p^p\d t\\
    & \leq \int_0^\infty t^{-1- p s} \big( \|f(\cdot + t \eta) - f(\cdot)\|_p + \|f(\cdot + t \xi) - f(\cdot)\|_p \big)^p \d t \\
    & \leq \big(1+|\eta|^{\frac s2\frac{p}{p-1}}\big)^{p-1}\int_0^\infty {t^{-1- p s}}\left(\frac{\|f(\cdot + t \eta) - f(\cdot)\|_p^p}{|\eta|^{\frac{p s}2}} + \|f(\cdot + t \xi) - f(\cdot)\|_p^p \right)\d t \\[2pt]
    & = \big(1+|\eta|^{\frac s2\frac{p}{p-1}}\big)^{p-1} \big( |\eta|^{-\frac{p s}2}\|\eta\|_{\fpp s p f}^{p s} + \|\xi\|_{\fpp s p f}^{p s} \big)\\[4pt]
    & \leq \big(1+|\eta|^{\frac s2\frac{p}{p-1}}\big)^{p-1} \big( d\, |\eta|^{\frac{p s}2} + \|\xi\|_{\fpp s p f}^{p s} \big),
\end{align}
where we used the inequality $a+b \leq (1+r^{p/(p-1)})^{(p-1)/p}((r^{-1} a)^p + b^p)^{1/p}$ for $a,b,r > 0$, which is a consequence of H\"older's inequality.

\goodbreak
We obtain 
\begin{equation}\label{eq_quasiineq_sup}
	\|\xi + \eta\|_{\fpp s p f}^{p s} \leq \big(1+|\eta|^{\frac s2\frac{p}{p-1}}\big)^{p-1} \big( d\, |\eta|^{\frac{p s}2} + \|\xi\|_{\fpp s p f}^{p s} \big).
\end{equation}
Applying inequality \eqref{eq_quasiineq_sup} to the vectors $\xi + \eta$ and $-\eta$, we get
\begin{equation}
	\|\xi\|_{\fpp s p f}^{p s} = \|\xi + \eta - \eta\|_{\fpp s p f}^{p s} \leq \big(1+|-\eta|^{\frac s2\frac{p}{p-1}}\big)^{p-1} \big( d\, |-\eta|^{\frac{p s}2} + \|\xi + \eta\|_{\fpp s p f}^{p s} \big),
\end{equation}
which implies
\begin{equation}\label{eq_quasiineq_inf}
	\|\xi + \eta\|_{\fpp s p f}^{p s} \geq \big(1+|\eta|^{\frac s2\frac{p}{p-1}}\big)^{p-1} \|\xi\|_{\fpp s p f}^{p s} - d\, |\eta|^{\frac{p s}2}.
\end{equation}
The continuity of $\|\cdot\|_{\fpp s p f}$ now follows from \eqref{eq_quasiineq_sup} and \eqref{eq_quasiineq_inf}.
\end{proof}

\section{Fractional Asymmetric $L^p$ Polar Projection Bodies}\label{sec_fracaprojectionbody}
Let $0<s<1$ and $1< p  <n/s$. For $f \in W^{s,p}(\R^n)$, define the asymmetric $s$-fractional $L^p$ polar projection bodies $\ppp s p f$ and $\ppm s p f$ as the star-shaped sets given by the gauge functions
\begin{equation}	
	\Vert \xi \Vert_{\pppm s p f}^{p s} = \int_0^\infty t^{-p s-1} \int_{\R^n} \plusminus{f(x+t\xi)-f(x)}p \d x \d t
\end{equation}
for $\xi\in\R^n$. 
We have $\ppm s p f= \ppp s p {(-f)}=-\ppp s p f$ and state our results just for $\ppp s p f$. Note that, as in the symmetric case,  $\Vert \cdot \Vert_{\ppp s p f}^{p s}$ is a one-homogeneous function on $\R^n$. Also note that
\begin{equation}\label{eq_plusminus}   
    \Vert\xi\Vert_{\fpp s p f}^{p s} =\Vert \xi\Vert_{\ppp s p f}^{p s}+ \Vert\xi\Vert_{\ppm s p f}^{p s}
\end{equation} 
for $\xi\in\R^n$.

Let $K\subset \R^n$ be a star body and $f \in W^{s,p}(\R^n)$. 
As in \eqref{eq_dualmixed_func}, we obtain that
\begin{equation}\label{eq_dualmixed_func+}
	\int_{\R^n} \int_{\R^n} \frac{\plus{f(x)-f(y)}p}{\|x-y\|_K^{n+p s}} \d x \d y = n\, \tilde V_{-p s}(K, \ppp s p f). 
\end{equation}
In the following proposition, we derive the basic properties of fractional asymmetric $L^p$ polar projection bodies.

\begin{proposition}\label{prop_aproperties}
	For non-zero $f \in W^{s,p}(\R^n)$, the set $\,\ppp s p f$ is a star body with the origin in its interior.
	Moreover, there is $c>0$ depending only on $f$ and $p$ such that $\ppp s p f \subseteq c\, \B$ for every $s \in (0,1)$.
\end{proposition}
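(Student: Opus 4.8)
The plan is to follow the proof of Proposition~\ref{prop_properties} with $|\,\cdot\,|$ replaced by the positive part $\pl{(\cdot)}$ throughout; the two elementary facts this uses are $\pl{(a+b)}\le\pl a+\pl b$ and $0\le\pl a\le|a|$ for $a,b\in\R$. By \eqref{eq_plusminus} we have $\Vert\xi\Vert_{\ppp s p f}^{ps}\le\Vert\xi\Vert_{\fpp s p f}^{ps}$ and $\Vert\xi\Vert_{\ppm s p f}^{ps}\le\Vert\xi\Vert_{\fpp s p f}^{ps}$, so $\int_{\sn}\Vert\xi\Vert_{\ppp s p f}^{ps}\d\xi$ and $\int_{\sn}\Vert\xi\Vert_{\ppm s p f}^{ps}\d\xi$ are finite by \eqref{eq_finite}; in particular one can choose a basis $\xi_1,\dots,\xi_n$ of $\R^n$ with $\Vert\xi_i\Vert_{\ppp s p f}^{ps}$ and $\Vert\xi_i\Vert_{\ppm s p f}^{ps}$ finite (the second being needed because $\ppp s p f$ is not origin-symmetric, so that $\Vert{-\xi_i}\Vert_{\ppp s p f}^{ps}=\Vert\xi_i\Vert_{\ppm s p f}^{ps}$). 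The subadditivity $\pl{(a+b)}\le\pl a+\pl b$ together with the triangle inequality in $L^p$ yields the sublinearity estimate \eqref{eq_sublinearity} and, via the same H\"older computation, the estimates \eqref{eq_quasiineq_sup} and \eqref{eq_quasiineq_inf} for $\Vert\,\cdot\,\Vert_{\ppp s p f}$. Iterating these along the chosen basis gives, as in Proposition~\ref{prop_properties}, $\Vert x\Vert_{\ppp s p f}\le d\,|x|$ for all $x$, so the origin is interior to $\ppp s p f$, and the continuity of $\Vert\,\cdot\,\Vert_{\ppp s p f}$. Once the uniform boundedness below is established, $\ppp s p f$ is a star body; only that boundedness requires a genuinely new argument, because the inequality $\Vert\,\cdot\,\Vert_{\ppp s p f}^{ps}\le\Vert\,\cdot\,\Vert_{\fpp s p f}^{ps}$ goes the wrong way for it.

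The boundedness reduces to the following claim: there exist $r>1$ and $c_0>0$, depending only on $f$ and $p$, such that $\int_{\R^n}\plus{f(x+t\xi)-f(x)}p\d x\ge c_0$ for all $t>2r$ and all $\xi\in\sn$. Indeed, granting the claim, $\Vert\xi\Vert_{\ppp s p f}^{ps}\ge c_0\int_{2r}^\infty t^{-ps-1}\d t=c_0\,(2r)^{-ps}/(ps)$, hence
\[ \rho_{\ppp s p f}(\xi)=\Vert\xi\Vert_{\ppp s p f}^{-1}\le 2r\,(ps/c_0)^{1/(ps)}\le 2r\sup_{0<u<p}(u/c_0)^{1/u}=:c, \]
and the supremum is finite because $u\mapsto(u/c_0)^{1/u}$ is continuous on $(0,p]$ and tends to $0$ as $u\to0^+$. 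Thus $\ppp s p f\subseteq c\,\B$ with $c$ independent of $s$.

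Proving the claim is the step I expect to be the main obstacle. I would fix $r>1$ with $\Vert f\Vert_{L^p(r\B)}^p\ge\frac34\Vert f\Vert_p^p$ (possible since $f\neq0$), so that $\Vert f\Vert_{L^p(\R^n\setminus r\B)}^p\le\frac14\Vert f\Vert_p^p$. For $t>2r$ and $\xi\in\sn$, the balls $r\B$ and $r\B-t\xi$ are disjoint, both $r\B-t\xi$ and $r\B+t\xi$ are contained in $\R^n\setminus r\B$, and hence $\Vert f\Vert_{L^p(r\B\pm t\xi)}\le(1/4)^{1/p}\Vert f\Vert_p$. On $r\B-t\xi$, where $x+t\xi\in r\B$, the pointwise bound $\pl{(a-b)}\ge\pl a-|b|$, the triangle inequality in $L^p$, and the change of variables $x\mapsto x-t\xi$ give
\[ \Big(\int_{r\B-t\xi}\plus{f(x+t\xi)-f(x)}p\d x\Big)^{1/p}\ge\Vert\pl f\Vert_{L^p(r\B)}-(1/4)^{1/p}\Vert f\Vert_p, \]
while on $r\B$, where $x\in r\B$, the bound $\pl{(a-b)}\ge\mn b-|a|$ gives
\[ \Big(\int_{r\B}\plus{f(x+t\xi)-f(x)}p\d x\Big)^{1/p}\ge\Vert\mn f\Vert_{L^p(r\B)}-(1/4)^{1/p}\Vert f\Vert_p. \]
Since $\Vert\pl f\Vert_{L^p(r\B)}^p+\Vert\mn f\Vert_{L^p(r\B)}^p=\Vert f\Vert_{L^p(r\B)}^p\ge\frac34\Vert f\Vert_p^p$, at least one of $\Vert\pl f\Vert_{L^p(r\B)}$ and $\Vert\mn f\Vert_{L^p(r\B)}$ is $\ge(3/8)^{1/p}\Vert f\Vert_p$; applying the corresponding one of the last two displays, and using that $r\B$ and $r\B-t\xi$ are disjoint subsets of $\R^n$, we obtain $\int_{\R^n}\plus{f(x+t\xi)-f(x)}p\d x\ge\big((3/8)^{1/p}-(1/4)^{1/p}\big)^p\Vert f\Vert_p^p=:c_0>0$ (positive since $3/8>1/4$), which proves the claim. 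What has no counterpart in the symmetric case (where one only uses that a far translate of $f$ has $L^p$-mass almost disjoint from that of $f$) is that one must track which of $\pl f,\mn f$ carries a definite fraction of the $L^p$-mass and, correspondingly, on which of the two far regions $r\B-t\xi$, $r\B$ the positive part $\pl{(f(\cdot+t\xi)-f)}$ essentially realizes that fraction.
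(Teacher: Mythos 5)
Your proof is correct, and while its overall skeleton matches the paper's (a uniform-in-$s$ lower bound on $\int_{\R^n}\plus{f(x+t\xi)-f(x)}p\d x$ for large $t$ gives the containment in $c\,\B$; finiteness plus the quasi-triangle inequalities give the interior point and continuity), the key step is argued by a genuinely different elementary route. The paper bounds the numerator from below using the convexity inequality $\plus{a+b}p\geq \plus{a}p+p\plus a{p-1}b$ together with H\"older, obtaining $\int_{r\B}\plus{f(x)-f(x+z)}p\d x\geq\tfrac12\int_{\R^n}\plus{f(x)}p\d x$ and $\int_{r\B-z}\plus{f(x)-f(x+z)}p\d x\geq\tfrac12\int_{\R^n}\minus{f(x)}p\d x$ for all $|z|>2r$, and then sums the two regions to get the clean constant $\tfrac12\|f\|_p^p$; you instead use subadditivity of the positive part with the reverse triangle inequality in $L^p$ on each of the regions $r\B-t\xi$ and $r\B$, and a pigeonhole deciding which of $\pl f$, $\mn f$ carries a fixed fraction of the mass in $r\B$, using only the corresponding region. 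Both are valid and of comparable difficulty; the paper's version avoids the case distinction and gives a better constant, yours avoids the convexity/H\"older bookkeeping, and your conversion of the bound $\Vert\xi\Vert_{\ppp s p f}^{ps}\geq c_0(2r)^{-ps}/(ps)$ into an $s$-independent radius via $\sup_{0<u\le p}(u/c_0)^{1/u}<\infty$ makes explicit a step the paper leaves implicit. Two minor remarks: the interior-point bound $\|x\|_{\ppp s p f}\le d\,|x|$ does not need to be re-derived along a basis, since $\Vert\xi\Vert_{\ppp s p f}^{ps}\le\Vert\xi\Vert_{\fpp s p f}^{ps}$ by \eqref{eq_plusminus}, i.e.\ $\fpp s p f\subseteq\ppp s p f$, so it follows at once from Proposition \ref{prop_properties} (this is what the paper does); and the disjointness of $r\B$ and $r\B-t\xi$ is not actually needed in your final step, only that each is a subset of $\R^n$ lying outside the support region where the translate has small mass.
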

\begin{proof}
Since the functions $\plus{a}p$ and $\minus{a}p$ are convex, the inequalities $\plus{a+b}p\geq \plus{a}p + p \plus a{p-1} b$ and $\minus{a+b}p \geq \minus a p + p \minus a{p-1} b$ hold for $a,b\in\R$.

If $\int_{\R^n} \plus{f(x)}p \d x>0$, take $\varepsilon >0$ so small that $\varepsilon +p \varepsilon ^{1/p} \left\| f\right\| _p^{p-1}\leq \frac 12 \int_{\R^n} \plus{f(x)}p \d x$, and take $r>0$ so large that $\int_{\R^n \setminus r\B} |f(x)|^p \d x < \varepsilon$.
For $z \in \R^n \setminus 2r \B$, we obtain by H\"older's inequality that
\begin{align}
	\int _{r\B}\plus{f(x)&-f(x+z)}p \d x \\
	&\geq \int _{r\B} \plus{f(x)}p-p\, \plus{f(x)}{p-1} f(x+z) \d x \\
	&\geq \int _{r\B} \plus{f(x)}p \d x - p \Big(\int _{r\B} \plus{f(x)}p \d x \Big)^{\frac{p-1}{p}} \Big(\int _{r\B}\left| f(x+z)\right| ^p \d x\Big)^{\frac{1}{p}}\\
	&\geq \int _{r\B} \plus{f(x)}p \d x - p \Big(\int _{\R^n} |f(x)|^p \d x \Big)^{\frac{p-1}{p}} \Big(\int _{\R^n\setminus r\B}\left| f(x)\right| ^p \d x\Big)^{\frac{1}{p}} \\
	&\geq \int _{\R^n} \plus{f(x)}p \d x - \varepsilon - p \,\|f\|_p^{p-1} \varepsilon^{\frac1p}\\
	&\geq \frac 12 \int _{\R^n} \plus{f(x)}p \d x.
\end{align}
In case $\int_{\R^n} \plus{f(x)}p \d x=0$ the previous inequality holds trivially for any $r>0$.
	
By an analogous calculation, and eventually increasing the  value of $r$, we obtain that
\begin{align}
	\int _{r\B -z }\plus{f(x)-f(x+z)}p \d x
	&= \int _{r\B}\minus{f(x)-f(x-z)}p \d x\\
	&\geq \frac 12 \int _{\R^n} \minus{f(x)}p \d x.
\end{align}
It follows that $\int_{\R^n} \plus{f(x)-f(x+z)}p \d x \geq \frac 12 \|f\|_p^p$ for every $z \in \R^n \setminus 2r \B$ with $r>0$ depending only on $f$.
Finally,
\begin{align}
	\|\xi\|_{\ppp s p f} ^{p s}
	&\geq \int_{2r}^\infty t^{-1-p s} \int_{\R^n} \plus{f(x)-f(x+z)}p \d x \d t\\
	&\geq \int_{2r}^\infty t^{-1-p s} \d t\, \frac 12 \int _{\R^n} |f(x)|^p \d x \\
	&\geq \frac{(2r)^{-p s}}{p s} \frac 12 \int _{\R^n} |f(x)|^p \d x \\
	&\geq \frac{(2r)^{-p}}{2 p} \|f\|_p^p.
\end{align}

Note that $\fpp s p f\subset \ppp s p f$. Hence, it follows from Proposition \ref{prop_properties} that $\ppp s p f$ contains the origin in its interior, that is, there is $d>0$ such that
\begin{equation}\label{eq_euclideanbounda}
	\|x\|_{\ppp s p f} \leq d\, |x|
\end{equation}
for every $x\in\R^n$.

Finally, we show that $\| \cdot \|_{\ppp s p f}$ is continuous. Observe that the inequality $(a+b)_+^p \leq (a_+ + b_+)^p$ holds for any $a,b \in \R$.
Hence, for $\xi, \eta \in \R^n$, we obtain that
\begin{align}
    &\int_{\R^n} \plus{f(x + t \xi + t \eta) - f(x) }p \d x  \\
    &=\int_{\R^n} \plus{f(x + t \xi + t \eta) - f(x + t \xi) + f(x + t \xi) - f(x) }p \d x  \\
    &\leq  \int_{\R^n} \big( \pl{(f(x + t \xi + t \eta) - f(x + t \xi))} + \pl{(f(x + t \xi) - f(x))} \big)^p \d x  \\
    &\leq  \int_{\R^n} (1+|\eta|^{\frac s 2\frac {p }{(p-1)} })^{p-1} \Big( \frac{ \plus{f(x + t \xi + t \eta) - f(x + t \xi)}p}{|\eta|^{\frac {p s}2}} + \plus{f(x + t \xi) - f(x)}p\Big) \d x \\
    & \leq (1+|\eta|^{\frac s2\frac {p }{(p-1)} })^{p-1} \Big( \frac{\|\pl{(f(\cdot + t \eta) - f(\cdot))}\|_p^p}{|\eta|^{\frac {p s}2}} +  \|\pl{(f(\cdot + t \xi) - f(\cdot))}\|_p^p \Big),
\end{align}
where we used the inequality $a+b \leq (1+r^{ p/(p-1)})^{(p-1)/p}((r^{-1} a)^p + b^p)^{1/p}$ for $a,b,r > 0$, which is a consequence of H\"older's inequality. Thus, integrating and using \eqref{eq_euclideanbounda}, we obtain
\begin{equation}\label{eq_quasiineq_sup+}
	\|\xi + \eta\|_{\ppp s p f}^{p s} \leq (1+|\eta|^{\frac s 2\frac p{p-1} })^{p-1} ( d\, |\eta|^{\frac{p s}2} + \|\xi\|_{\ppp s p f}^{p s} ).
\end{equation}
Applying inequality \eqref{eq_quasiineq_sup+} to the vectors $\xi + \eta$ and $-\eta$, we get
\begin{equation}
	\|\xi\|_{\ppp s p f}^{p s} = \|\xi + \eta - \eta\|_{\ppp s p f}^{ p s} \leq (1+|-\eta|^{\frac s 2\frac p{p-1}})^{p-1} ( d\, |-\eta|^{\frac{p s}2} + \|\xi + \eta\|_{\ppp s p f}^{p s} ),
\end{equation}
which implies
\begin{equation}\label{eq_quasiineq_inf+}
	\|\xi + \eta\|_{\ppp s p f}^{p s} \geq (1+|\eta|^{\frac s 2\frac p{p-1}})^{-(p-1)} \|\xi\|_{\ppp s p f}^{ p s} - d\, |\eta|^{\frac{p s}2}.
\end{equation}
The continuity of $\|\cdot\|_{\ppp s p f}$ now follows from \eqref{eq_quasiineq_sup+} and \eqref{eq_quasiineq_inf+}.
\end{proof}

\section{The Limit of Fractional $L^p$ Polar Projection Bodies}\label{sec_limit2}
We establish the limiting behavior of $s$-fractional $L^p$ polar projection bodies for $1< p  <n/s$ as $s\to1^-$ in the symmetric and asymmetric case. 
For $p=1$, a corresponding result was proved in \cite{HaddadLudwig_fracsob}.

Let $0<s<1$ and $1< p  <n/s$. Set $p'=p/(p-1)$. We say that $f_k \to f$ weakly in $L^p(\R^n)$ if
	\[\int_{\R^n} f_k(x) g(x) \d x \to \int_{\R^n} f(x) g(x) \d x\]
for every $g \in L^{p'}(\R^n)$ as $k\to\infty$. Set $\Bp = \{g \in L^{p'}(\R^n) : g \geq 0, \|g\|_{p'} \leq 1 \}$.

\goodbreak
We require the following lemmas.

\begin{lemma}\label{lem_positivepart}
    The following statements hold.
	\begin{enumerate}[label={(\arabic*)}, ref = \ref{lem_positivepart}\,(\arabic*)]
	\item\label{lem_dualnorm}
	For $f \in L^p(\R^n)$,  
	\[\|\pl{f}\|_p = \sup_{g \in \Bp} \int_{\R^n} f(x) g(x) \d x.\]
	  \item\label{lem_weaklimit}
	Let $f_k, f\in L^p(\R^n)$. If $f_k \to f$ weakly in $L^p(\R^n)$ as $k\to\infty$, then
	\[\liminf_{k\to\infty} \|\pl{(f_k)}\|_p \geq \|\pl{f}\|_p.\]
    \item\label{lem_density}
	Assume $f_k$ is a bounded sequence in $L^p(\R^n)$. If
	\[\lim_{k\to\infty}\int_{\R^n} f_k(x) g(x)\d x = \int_{\R^n} f(x) g(x)\d x\]
	for every $g$ in a dense subset $D \subseteq L^{p'}(\R^n)$, then $f_k \to f$ weakly in $L^p(\R^n)$ as $k\to\infty$.
	\end{enumerate}
\end{lemma}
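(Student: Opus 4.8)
The plan is to treat the three statements in sequence, since (2) will use (1) and (3) is essentially a density-plus-boundedness argument.

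For (1), I would recognize $\|\pl f\|_p$ as the $L^p$-norm of $\pl f$ and use the standard duality $\|h\|_p = \sup\{\int h g : g\in L^{p'}, \|g\|_{p'}\le 1\}$. The point is to show the supremum may be restricted to \emph{non-negative} $g$ and that only the positive part of $f$ contributes. On the one hand, for any $g\in\Bp$ we have $\int f g \le \int \pl f\, g \le \|\pl f\|_p \|g\|_{p'} \le \|\pl f\|_p$, giving ``$\ge$''. For the reverse inequality, if $\pl f\not\equiv 0$ take the optimizer $g_0 = \pl f^{\,p-1}/\|\pl f\|_p^{p-1}$ (suitably interpreted, with $g_0=0$ where $\pl f=0$); then $g_0\in\Bp$, $g_0$ is supported on $\{f>0\}$, so $\int f g_0 = \int \pl f\, g_0 = \|\pl f\|_p$. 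If $\pl f\equiv 0$ both sides are $0$ (take $g=0$). This is routine and I would keep it brief.

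For (2), weak convergence $f_k\to f$ in $L^p$ means $\int f_k g \to \int f g$ for every $g\in L^{p'}$, in particular for every $g\in\Bp$. Fix $g\in\Bp$. By (1) applied to $f_k$, $\|\pl{(f_k)}\|_p \ge \int f_k g$ for each $k$, hence $\liminf_k \|\pl{(f_k)}\|_p \ge \lim_k \int f_k g = \int f g$. Taking the supremum over $g\in\Bp$ and invoking (1) for $f$ gives $\liminf_k \|\pl{(f_k)}\|_p \ge \sup_{g\in\Bp}\int f g = \|\pl f\|_p$. This is the lower semicontinuity of a supremum of continuous (under weak convergence) linear functionals, so it is immediate once (1) is in hand.

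For (3), the assumption gives $\int f_k g \to \int f g$ for all $g$ in a dense set $D\subseteq L^{p'}$, and $\|f_k\|_p \le M$ for some $M$. Given arbitrary $g\in L^{p'}$ and $\varepsilon>0$, pick $g'\in D$ with $\|g-g'\|_{p'}<\varepsilon$; then by H\"older,
\begin{equation}
\Big|\int f_k g - \int f g\Big| \le \Big|\int f_k (g-g')\Big| + \Big|\int (f_k-f) g'\Big| + \Big|\int f (g'-g)\Big| \le (M+\|f\|_p)\varepsilon + \Big|\int (f_k-f)g'\Big|,
\end{equation}
and the middle term tends to $0$ as $k\to\infty$; letting $\varepsilon\to0$ finishes the proof. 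One should note $\|f\|_p\le M$ is not assumed but follows (e.g. from lower semicontinuity of the norm, or simply absorb it by writing the bound as $2M\varepsilon$ after passing to the liminf — I would phrase it to avoid the minor circularity). I do not anticipate a genuine obstacle here; the only mild subtlety is the careful handling of the optimizer in (1) on the set where $\pl f$ vanishes, and making the $3\varepsilon$-argument in (3) clean without assuming $f\in$ a bounded set a priori.
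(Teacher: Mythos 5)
Your proposal is correct and follows essentially the same route as the paper: duality with the explicit optimizer $\pl{f}^{\,p/p'}/\|\pl{f}\|_p^{p/p'}$ for (1), the supremum-of-weakly-continuous-functionals argument for (2), and the standard $3\varepsilon$ density argument with the uniform $L^p$ bound for (3). Your extra remarks (the case $\pl{f}\equiv 0$ and the handling of $\|f\|_p$ in (3), which the paper simply absorbs by choosing $c\ge\max\{\|f_k\|_p,\|f\|_p\}$) are harmless refinements, not deviations.
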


\begin{proof}
First we prove (1). Let $g \in \Bp$ and write $f = \pl{f} - \mn{f}$. Since $\mn{f}$ and $g$ are non-negative, it follows from H\"older's inequality that
	\[ \int_{\R^n} f(x) g(x)\d x 
	\leq \int_{\R^n} \pl{f}(x) g(x) \d x\leq \|\pl{f}\|_p. \]
For the opposite inequality, take $g = \|\pl{f}\|_p^{-{p}/{p'}} \pl{f}^{{p}/{p'}}$ and notice that $g \in \Bp$ and
	\[ \int_{\R^n} f(x) g(x)\d x = \|\pl{f}\|_p^{-\frac p{p'}} \int_{\R^n} f(x) \pl{f}(x)^{\frac p{p'}} \d x \le \|\pl{f}\|_p^{-\frac p{p'}} \int_{\R^n} \pl{f}(x)^p\d x = \|\pl{f}\|_p.\]

Next we prove (2). Fix $k_0$ and $g_0 \in \Bp$. By (1), we  have
	\[\int_{\R^n}  f_{k_0}(x) g_0(x) \d x \leq \sup_{g \in \Bp} \int_{\R^n} f_{k_0}(x) g(x)\d x = \|\pl{(f_{{k_0}})}\|_p.\]
Since this inequality holds for every $k_0$,
	\[\int_{\R^n} f(x) g_0(x) \d x = \lim_{k\to\infty} \int_{\R^n}  f_k(x) g_0(x)\d x \leq \liminf_{k\to\infty} \|\pl{(f_{k})}\|_p.\]
Thus, by (1),
	\[ \|\pl{f}\|_p = \sup_{g \in B_{p',+}} \int_{\R^n} f(x)g(x)\d x \leq \liminf_{k\to\infty} \|\pl{(f_{k})}\|_p.\]

Finally, we prove (3).	Take $c \ge \max\{\|f_k\|_p, \|f\|_p\}$.
Let $\varepsilon > 0$ and $g \in L^{p'}(\R^n)$. Take $h \in D$ such that $\|g-h\|_{p'} < \varepsilon/(2c)$. Then
\begin{align}
	&\Big| \int_{\R^n} f_k(x) g(x)\d x - \int_{\R^n} f(x) g(x)\d x \Big|\\
	&\leq \Big| \int_{\R^n} f_k(x) (g(x) - h(x))\d x \Big| + \Big| \int_{\R^n} f_k(x) h(x) \d x - \int_{\R^n} f(x) h(x)\d x \Big|\\
	& \phantom{\leq} + \Big| \int_{\R^n} f(x) (g(x) - h(x)) \d x \Big|\\
	&\leq c \varepsilon/(2c) + \Big| \int_{\R^n} f_k(x) h(x)\d x - \int_{\R^n} f(x) h(x) \d x \Big| + c \varepsilon/(2c)
\end{align}
and the statement follows.
\end{proof}

\goodbreak
\begin{lemma}\label{lem_limitaL1}
	For $f \in W^{1,p}(\R^n)$ and fixed $\xi \in \sn$,
	   \[\lim_{t\to 0} \Big\| \pl{\Big( \frac{f(\cdot+t\xi)-f(\cdot)}t \Big)} \Big\|_{p}^p = \int_{\R^n} \pl{\langle \nabla f(x), \xi \rangle}^p \d x.\]
\end{lemma}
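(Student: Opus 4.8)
The plan is to reduce the claim to the classical fact that difference quotients converge to the weak gradient in $L^p$. Recall that for $f\in W^{1,p}(\R^n)$ and fixed $\xi\in\sn$, one has
\[
\frac{f(\cdot+t\xi)-f(\cdot)}{t} \longrightarrow \langle \nabla f(\cdot),\xi\rangle
\]
in $L^p(\R^n)$ as $t\to 0$; this is standard (it holds for $f\in C_c^\infty$ by the fundamental theorem of calculus together with Minkowski's integral inequality, and extends to all of $W^{1,p}$ by density, using that the $L^p$ norms of the difference quotients are uniformly bounded by $\|\nabla f\|_p$). First I would invoke this convergence.

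Next I would pass from the functions to their positive parts. The map $u\mapsto \pl u = \max\{u,0\}$ is $1$-Lipschitz on $\R$ with $\pl 0 = 0$, so $\|\pl u - \pl v\|_p \le \|u-v\|_p$ for all $u,v\in L^p(\R^n)$; hence $\pl u$ depends continuously on $u$ in $L^p$. Applying this with $u = t^{-1}(f(\cdot+t\xi)-f(\cdot))$ and $v = \langle \nabla f(\cdot),\xi\rangle$ gives
\[
\Big\| \pl{\Big( \frac{f(\cdot+t\xi)-f(\cdot)}{t} \Big)} - \pl{\langle \nabla f(\cdot),\xi\rangle} \Big\|_p \le \Big\| \frac{f(\cdot+t\xi)-f(\cdot)}{t} - \langle \nabla f(\cdot),\xi\rangle \Big\|_p \to 0
\]
as $t\to 0$. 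Therefore $\pl{\big(t^{-1}(f(\cdot+t\xi)-f(\cdot))\big)} \to \pl{\langle\nabla f(\cdot),\xi\rangle}$ in $L^p(\R^n)$, and since the $L^p$ norm is continuous, the $p$-th powers of the norms converge, which is exactly the assertion.

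I do not expect a genuine obstacle here; the only point requiring a little care is the justification of the $L^p$ convergence of the difference quotients, for which I would either cite a standard reference (e.g.\ \cite{Mazya}) or sketch the density argument above. Everything else is the elementary Lipschitz estimate for the positive-part truncation.
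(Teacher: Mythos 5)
Your argument is correct, but it takes a genuinely different route from the paper's. You reduce everything to the classical fact that difference quotients of a $W^{1,p}$ function converge \emph{strongly} in $L^p(\R^n)$ to $\langle\nabla f(\cdot),\xi\rangle$, and then transfer this to the positive parts via the pointwise $1$-Lipschitz bound $|\pl u-\pl v|\le|u-v|$; the only nontrivial ingredient is the cited standard convergence, which indeed follows by your density argument together with the uniform bound $\big\|t^{-1}(f(\cdot+t\xi)-f(\cdot))\big\|_p\le\|\langle\nabla f(\cdot),\xi\rangle\|_p$ from Minkowski's integral inequality. The paper never invokes strong convergence: it proves, via an integration-by-parts identity, that the difference quotients are uniformly bounded in $L^p$ and converge only \emph{weakly} to $\langle\nabla f(\cdot),\xi\rangle$; then Lemma~\ref{lem_weaklimit} gives the lower bound $\liminf_{t\to0}\|\pl{(t^{-1}(f(\cdot+t\xi)-f(\cdot)))}\|_p\ge\|\pl{\langle\nabla f(\cdot),\xi\rangle}\|_p$, while the duality formula of Lemma~\ref{lem_dualnorm}, combined with the observation that $x\mapsto\int_0^1 g(x-rt\xi)\,\d r$ remains in $\Bp$ whenever $g\in\Bp$, yields the reverse inequality $\|\pl{(t^{-1}(f(\cdot+t\xi)-f(\cdot)))}\|_p\le\|\pl{\langle\nabla f(\cdot),\xi\rangle}\|_p$ for \emph{every} $t>0$, not merely in the limit. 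What each approach buys: yours is shorter and leans on a textbook theorem plus an elementary truncation estimate, bypassing Lemma~\ref{lem_positivepart} entirely; the paper's is self-contained (it essentially reproves the weak form of that theorem along the way), explains why Lemma~\ref{lem_positivepart} was set up, and produces the pointwise-in-$t$ upper bound on $\|\pl{(\cdot)}\|_p$ as a useful byproduct. Both proofs are valid.
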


\begin{proof}
Let $g:\R^n\to \R$ be a smooth function with compact support. Write $\div_x$ for the divergence taken with respect to the variable $x$. Using integration by parts, we obtain for $\xi\in\sn$ and $t>0$,
\begin{align*}
	\int_{\R^n} g(x) \frac{f(x+t\xi)-f(x)}t \d x
	&= \int_{\R^n} f(x) \frac{g(x-t\xi)-g(x)}t \d x\\
	&= - \int_{\R^n} f(x) \int_0^1 \langle \nabla g(x-r t\xi), \xi \rangle \d r \d x\\
	&= - \int_{\R^n} f(x) \div_x\Big(\int_0^1 g(x-r t\xi) \d r\, \xi \Big)\d x \\
	&= \int_{\R^n} \Big(\int_0^1 g(x-r t\xi) d r \Big) \langle \nabla f(x),\xi \rangle \d x.
\end{align*}
By Minkowski's integral inequality $\| \int_0^1 g(\cdot-r t\xi) \d r \|_{p'} \leq \|g\|_{p'}$, and we deduce 
	\[ \Big\|\frac{f(\cdot+t\xi)-f(\cdot)}t \Big\|_p \leq \| \langle \nabla f(\cdot),\xi \rangle \|_p < \infty.\]
Hence, $\frac{f(\cdot+t\xi)-f(\cdot)}t$ is uniformly bounded in $L^p(\R^n)$ on $(0,\infty)$.

By Lemma \ref{lem_density},
\begin{align}
	\label{eq_weaklimit}
	\lim_{t\to 0}\int_{\R^n} g(x) \frac{f(x+t\xi)-f(x)}t \d x
	= \int_{\R^n} g(x) \langle \nabla f(x), \xi \rangle \d x
\end{align}
for every $g \in L^{p'}(\R^n)$. 
Hence, $\frac{f(\cdot+t\xi)-f(\cdot)}t$ converges weakly to $\langle \nabla f(\cdot), \xi \rangle$ as $t\to 0$.

By Lemma \ref{lem_weaklimit},
	\[\liminf_{t\to 0} \Big\|\pl{\Big( \frac{f(\cdot+t\xi)-f(\cdot)}t \Big)} \Big\|_{p}	\geq \|\pl{\langle \nabla f(\cdot), \xi \rangle} \|_p.\]
For the opposite inequality we recall that for any $g \in \Bp$, the function $x\mapsto \int_0^1 g(x-r t\xi) \d r$ is in $\Bp$ as well.
Hence, 
\begin{align}
    \int_{\R^n} g(x) \frac{f(x+t\xi)-f(x)}t \d x
	&= \int_{\R^n} \Big(\int_0^1 g(x-r t\xi) \d r \Big) \langle \nabla f(x),\xi \rangle \d x \\
   &\leq \|\pl{\langle \nabla f(x), \xi \rangle} \|_p.
\end{align}
Again by Lemma \ref{lem_dualnorm},
	\[\Big\|\pl{\Big( \frac{f(\cdot+t\xi)-f(\cdot)}t \Big)} \Big\|_{p} \leq \|\pl{\langle \nabla f(\cdot), \xi \rangle} \|_p\]
for each $t>0$.
\end{proof}

The following result is Lemma 4 in \cite{HaddadLudwig_fracsob}.

\begin{lemma}\label{lem_limit1D}
	If $\varphi:[0,\infty) \to [0,\infty)$ be a measurable function with $\lim_{t\to 0^+} \varphi(t) = \varphi(0)$ and such that $\int_0^\infty t^{-s_0} \varphi(t) \d t < \infty$ for some $s_0 \in (0,1)$, then
	\[ \lim_{s\to 1^-} (1-s) \int_0^\infty t^{-s} \varphi(t) \d t = \varphi(0).\]
\end{lemma}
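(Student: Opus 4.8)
The plan is to exploit the exact identity $(1-s)\int_0^1 t^{-s}\,\d t=1$, valid for every $s\in(0,1)$. It gives $(1-s)\int_0^1 t^{-s}\varphi(0)\,\d t=\varphi(0)$, so the assertion reduces to showing that the two remainders
\[
(1-s)\int_0^1 t^{-s}\bigl(\varphi(t)-\varphi(0)\bigr)\,\d t
\qquad\text{and}\qquad
(1-s)\int_1^\infty t^{-s}\varphi(t)\,\d t
\]
tend to $0$ as $s\to1^-$. As only a left neighbourhood of $1$ is relevant, I work throughout with $s_0<s<1$.

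The tail is handled by the integrability hypothesis: for $t\ge1$ and $s>s_0$ one has $t^{-s}\le t^{-s_0}$, hence
\[
0\le\int_1^\infty t^{-s}\varphi(t)\,\d t\le\int_0^\infty t^{-s_0}\varphi(t)\,\d t<\infty,
\]
and multiplying by $(1-s)$ gives a quantity that vanishes as $s\to1^-$.

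For the first remainder I fix $\varepsilon>0$ and, using $\lim_{t\to0^+}\varphi(t)=\varphi(0)$, choose $\delta\in(0,1)$ with $\bigl|\varphi(t)-\varphi(0)\bigr|<\varepsilon$ on $(0,\delta)$. On $(0,\delta)$,
\[
(1-s)\int_0^\delta t^{-s}\bigl|\varphi(t)-\varphi(0)\bigr|\,\d t\le\varepsilon\,(1-s)\int_0^\delta t^{-s}\,\d t=\varepsilon\,\delta^{1-s}\le\varepsilon.
\]
On $(\delta,1)$ one has $t^{-s_0}\ge1$, so $\int_\delta^1\varphi(t)\,\d t\le\int_0^\infty t^{-s_0}\varphi(t)\,\d t=:C<\infty$, while $t^{-s}\le\delta^{-1}$ for $s<1$; hence
\[
(1-s)\int_\delta^1 t^{-s}\bigl|\varphi(t)-\varphi(0)\bigr|\,\d t\le(1-s)\,\delta^{-1}\bigl(C+\varphi(0)\bigr)\longrightarrow 0
\]
as $s\to1^-$. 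Combining the two bounds yields $\limsup_{s\to1^-}(1-s)\int_0^1 t^{-s}\bigl|\varphi(t)-\varphi(0)\bigr|\,\d t\le\varepsilon$, and letting $\varepsilon\to0$ shows this limit is $0$.

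Adding the pieces proves the lemma (and shows in passing that $\int_0^\infty t^{-s}\varphi(t)\,\d t<\infty$ for $s\in(s_0,1)$). This is a routine Abelian-type estimate with no real obstacle; the only point requiring a word of justification is the finiteness of $\int_\delta^1\varphi$, which is not assumed directly but follows from $\int_0^\infty t^{-s_0}\varphi<\infty$ together with $t^{-s_0}\ge1$ on $(\delta,1)$.
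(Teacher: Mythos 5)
Your proof is correct: the decomposition at $t=1$, the exact identity $(1-s)\int_0^1 t^{-s}\d t=1$, the $\varepsilon$--$\delta$ control near the origin, and the tail bound via $t^{-s}\le t^{-s_0}$ for $t\ge 1$ all work exactly as written, and you also correctly note why $\int_\delta^1\varphi$ is finite. The paper does not reprint a proof (it cites Lemma~4 of \cite{HaddadLudwig_fracsob}), and your argument is essentially the same standard Abelian-type estimate used there, so there is nothing to add.
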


\goodbreak
We are now able to prove the main result of this section.

\begin{theorem}\label{thm_limitaM}
	Let $f \in W^{1,p} (\R^n)$. For $\xi\in\sn$, 
	\[\lim_{s\to 1^-} (p(1-s))^{\frac 1p}\Vert\xi \Vert_{\ppp s p f} = \Vert\xi\Vert_{\app pf}.\]
	Moreover,
	\[ 
		\lim_{s\to 1^-} p(1-s) \vol{\ppp s p f}^{-\frac{p s}n} = \vol{\app pf}^{-\frac p n},
	\]
    and 
    \begin{equation}
	\lim_{s\to 1^-} p(1-s) \tilde V_{-p s}(K, \ppp{s}p f)=  
	\tilde V_{-p}(K,  \app pf)
    \end{equation}
    for every star body $K\subset \R^n$.
\end{theorem}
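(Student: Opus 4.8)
The plan is to establish the three limits in order, deriving each from the previous one together with the technical results already assembled. The first limit, $\lim_{s\to1^-}(p(1-s))^{1/p}\Vert\xi\Vert_{\ppp spf}=\Vert\xi\Vert_{\app pf}$, is the heart of the matter; the other two follow by integration and by a uniform-boundedness argument. Fix $\xi\in\sn$. By definition,
\[
(p(1-s))\,\Vert\xi\Vert_{\ppp spf}^{ps}
= p(1-s)\int_0^\infty t^{-ps-1}\big\Vert\pl{(f(\cdot+t\xi)-f(\cdot))}\big\Vert_p^p\,\d t
= p(1-s)\int_0^\infty t^{-s}\,\varphi(t)\,\d t,
\]
where I set $\varphi(t)=t^{-1-p(s-1)}\big\Vert\pl{(f(\cdot+t\xi)-f(\cdot))}\big\Vert_p^p$; more cleanly, I substitute and write the integrand as $t^{-s}$ times $\varphi_s(t):=t^{\,p-ps}\,t^{-p}\Vert\pl{(f(\cdot+t\xi)-f(\cdot))}\Vert_p^p$, i.e. essentially $\big\Vert\pl{((f(\cdot+t\xi)-f(\cdot))/t)}\big\Vert_p^p$ up to a factor $t^{p(1-s)}\to1$. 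I want to apply Lemma~\ref{lem_limit1D} with $\varphi(t)=\big\Vert\pl{((f(\cdot+t\xi)-f(\cdot))/t)}\big\Vert_p^p$: by Lemma~\ref{lem_limitaL1} this $\varphi$ is continuous at $0$ with $\varphi(0)=\int_{\R^n}\pl{\langle\nabla f(x),\xi\rangle}^p\d x=\Vert\xi\Vert_{\app pf}^p$, and by the uniform bound $\big\Vert(f(\cdot+t\xi)-f(\cdot))/t\big\Vert_p\le\Vert\langle\nabla f(\cdot),\xi\rangle\Vert_p$ proved inside Lemma~\ref{lem_limitaL1} the function $\varphi$ is bounded near $0$, which gives the integrability hypothesis $\int_0^\infty t^{-s_0}\varphi(t)\,\d t<\infty$ for suitable $s_0$ (combining the uniform bound for small $t$ with the growth estimate from Proposition~\ref{prop_aproperties} for large $t$). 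The one subtlety is matching the two expressions for $p(1-s)\Vert\xi\Vert_{\ppp spf}^{ps}$ and Lemma~\ref{lem_limit1D}'s form: after the factor $t^{p(1-s)}$ is absorbed, one still needs that $t^{p(1-s)}\varphi(t)\to\varphi(0)$ uniformly enough on bounded intervals, which is immediate since $t^{p(1-s)}\to1$ locally uniformly. Taking $p$-th roots gives the first claim.

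Next I upgrade the pointwise-in-$\xi$ limit to the volume limit. Since $\vol{\ppp spf}=\frac1n\int_{\sn}\Vert\xi\Vert_{\ppp spf}^{-n}\d\xi$, I have
\[
p(1-s)\,\vol{\ppp spf}^{-ps/n}
=\Big(n\,p(1-s)^{\,n/(ps)}\,\vol{\ppp spf}\Big)^{-ps/n}\cdot(\text{bookkeeping constants}),
\]
so it suffices to show $\big(p(1-s)\big)^{n/(ps)}\vol{\ppp spf}\to\vol{\app pf}$ after re-collecting powers; concretely, $p(1-s)\vol{\ppp spf}^{-ps/n}=\big(\frac1n\int_{\sn}\big((p(1-s))^{1/p}\Vert\xi\Vert_{\ppp spf}\big)^{-n}(p(1-s))^{(ps-p)\cdot(\ldots)}\d\xi\big)^{-ps/n}$ where the stray power of $p(1-s)$ tends to $1$. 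The integrand $\big((p(1-s))^{1/p}\Vert\xi\Vert_{\ppp spf}\big)^{-n}$ converges pointwise to $\Vert\xi\Vert_{\app pf}^{-n}=\rho_{\app pf}(\xi)^n$ by the first part, and it is dominated uniformly in $s$: Proposition~\ref{prop_aproperties} gives $\ppp spf\subseteq c\B$, hence $\Vert\xi\Vert_{\ppp spf}\ge c^{-1}$, so $\big((p(1-s))^{1/p}\Vert\xi\Vert_{\ppp spf}\big)^{-n}\le c^n(p(1-s))^{-n/p}$; since $n/p<s\le1$ we do not quite get a constant bound, but after carrying the correct normalization (the exponent is $-n/(ps)$ on the volume, producing exactly the $(1-s)$-power that cancels), dominated convergence applies. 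I will present this carefully by writing $p(1-s)\vol{\ppp spf}^{-ps/n}=\big(\tfrac1n\int_{\sn}\rho_{s}(\xi)^{n}\,\d\xi\big)^{-ps/n}$ with $\rho_s(\xi):=(p(1-s))^{1/(ps)}\rho_{\ppp spf}(\xi)$, note $\rho_s(\xi)\to\rho_{\app pf}(\xi)$ pointwise (taking the $(ps)$-th root of $p(1-s)$ tends to $1$) and $\rho_s(\xi)\le (p(1-s))^{1/(ps)}c\to c$ so $\rho_s$ is uniformly bounded near $s=1$, then apply dominated convergence and continuity of $r\mapsto r^{-ps/n}$.

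Finally, the dual mixed volume limit follows the same template: $p(1-s)\,\tilde V_{-ps}(K,\ppp spf)=\frac1n\int_{\sn}\rho_K(\xi)^{n+ps}\big(p(1-s)\,\rho_{\ppp spf}(\xi)^{ps}\big)\d\xi$, and $p(1-s)\rho_{\ppp spf}(\xi)^{ps}=\big((p(1-s))^{1/p}\Vert\xi\Vert_{\ppp spf}\big)^{-ps}\to\Vert\xi\Vert_{\app pf}^{-p}=\rho_{\app pf}(\xi)^{p}$ pointwise on $\sn$ by the first part. The factor $\rho_K(\xi)^{n+ps}$ is bounded on $\sn$ uniformly in $s\in(1/2,1)$ since $K$ is a star body (radial function continuous and positive on the compact set $\sn$), and $p(1-s)\rho_{\ppp spf}(\xi)^{ps}\le (p(1-s))\,c^{ps}$ which is bounded near $s=1$; dominated convergence then yields $p(1-s)\tilde V_{-ps}(K,\ppp spf)\to\frac1n\int_{\sn}\rho_K(\xi)^{n}\rho_{\app pf}(\xi)^{p}\d\xi=\tilde V_{-p}(K,\app pf)$. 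The main obstacle is the first step: verifying the hypotheses of Lemma~\ref{lem_limit1D} — in particular getting the $t\to0$ continuity of the relevant one-variable function from Lemma~\ref{lem_limitaL1} and the integrability near $t=0$ and $t=\infty$ — and correctly tracking the harmless powers $t^{p(1-s)}$ and $(p(1-s))^{1/(ps)}$ that appear when passing between the $ps$-homogeneous gauge and the $p$-homogeneous limit; once that is in place, steps two and three are routine applications of dominated convergence.
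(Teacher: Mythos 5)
Your first limit is handled exactly as in the paper: write $\Vert\xi\Vert_{\ppp s p f}^{ps}=\int_0^\infty t^{p(1-s)-1}\varphi(t)\,\d t$ with $\varphi(t)=\Vert\pl{((f(\cdot+t\xi)-f)/t)}\Vert_p^p$, then combine Lemma \ref{lem_limitaL1} (continuity at $t=0$ and the uniform bound $\varphi(t)\le\Vert\langle\nabla f(\cdot),\xi\rangle\Vert_p^p$) with Lemma \ref{lem_limit1D}; the stray power of $t$ you worry about disappears if you simply set $\sigma=1-p(1-s)$ (or substitute $u=t^p$), and the large-$t$ integrability comes from the trivial bound $\varphi(t)\le(2\Vert f\Vert_p/t)^p$, not from Proposition \ref{prop_aproperties}. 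The trouble is in your two dominated convergence steps, where the inequalities you invoke point the wrong way. Since $p(1-s)\Vert\xi\Vert_{\ppp s p f}^{ps}\to\Vert\xi\Vert_{\app p f}^p>0$, the gauge blows up and the bodies shrink to the origin, so the correctly normalized radial function is $\tilde\rho_s(\xi)=(p(1-s))^{-1/(ps)}\rho_{\ppp s p f}(\xi)$ (an inflation), not $(p(1-s))^{1/(ps)}\rho_{\ppp s p f}(\xi)$: with your normalization the pointwise limit is $0$, and your assertion $(p(1-s))^{1/(ps)}c\to c$ is false (the limit is $0$); also $n/p>1>s$, so the aside ``$n/p<s\le1$'' is wrong. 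More seriously, Proposition \ref{prop_aproperties} gives the \emph{outer} containment $\ppp s p f\subseteq c\,\B$, i.e.\ $\rho_{\ppp s p f}\le c$, which after the correct inflation only yields $\tilde\rho_s\le(p(1-s))^{-1/(ps)}c\to\infty$ — no cancellation can rescue this, so it is not a dominating function. What the volume limit actually needs is the opposite: a positive lower bound for $p(1-s)\Vert\xi\Vert_{\ppp s p f}^{ps}$, uniform in $\xi\in\sn$ and in $s$ near $1$ (equivalently, the rescaled bodies lie in a fixed ball). That uniform bound does not follow from the containment in $c\,\B$ and is nowhere established in your proposal; it requires, e.g., a uniform-in-$\xi$ version of the $t\to0^+$ lower bound on $\Vert\pl{((f(\cdot+t\xi)-f)/t)}\Vert_p$ (a compactness argument on $\sn$ using the weak-convergence and lower-semicontinuity ingredients of Lemmas \ref{lem_weaklimit} and \ref{lem_limitaL1}) or some other uniform-integrability input. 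This is the genuine gap.

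In the dual mixed volume step you have the same gauge/radial confusion: $\tilde V_{-ps}(K,\ppp s p f)=\tfrac1n\int_{\sn}\rho_K(\xi)^{n+ps}\,\Vert\xi\Vert_{\ppp s p f}^{ps}\,\d\xi$ (the gauge, not $\rho_{\ppp s p f}^{ps}$), and likewise $\tilde V_{-p}(K,\app p f)=\tfrac1n\int_{\sn}\rho_K^{n}\,\Vert\xi\Vert_{\app p f}^{p}\,\d\xi$. So the quantity to dominate is $p(1-s)\Vert\xi\Vert_{\ppp s p f}^{ps}$ from \emph{above}, and your bound $p(1-s)\rho_{\ppp s p f}^{ps}\le p(1-s)c^{ps}$ controls the wrong object. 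Here the repair is easy and does not need Proposition \ref{prop_aproperties} at all: from $\Vert\pl{(f(\cdot+t\xi)-f)}\Vert_p^p\le\min\{t^p\Vert\,|\nabla f|\,\Vert_p^p,\;(2\Vert f\Vert_p)^p\}$ one gets $p(1-s)\Vert\xi\Vert_{\ppp s p f}^{ps}\le\Vert\,|\nabla f|\,\Vert_p^p+\tfrac{1-s}{s}(2\Vert f\Vert_p)^p$, a constant for $s$ near $1$, and dominated convergence then goes through — but as written this is not what you proved. So: step one is sound and matches the paper; steps two and three need to be redone with the correct normalization, the correct integrands, and, for the volume, a genuinely new uniform lower bound on the normalized gauge.
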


\begin{proof}
Define $\varphi: [0,\infty)\to [0,\infty)$ by
	\[\varphi(t) = \Big\| \pl{\Big( \frac{f(\cdot+t\xi)-f(\cdot)}t \Big)} \Big\|_p^p,\]
and note that $\varphi(t) \leq \left( \frac{2\|f\|_p}t \right)^p$ for $t> 0$.
By Lemma \ref{lem_limit1D} and Lemma \ref{lem_limitaL1},
	\[\lim_{s\to 1^-} p(1-s) \int_0^\infty t^{p(1-s)-1 } \Big\| \pl{\Big( \frac{f(\cdot+t\xi)-f(\cdot)}t \Big)} \Big\|_p^p \d t 	
	= \int_{\R^n} \pl{\langle \nabla f(x), \xi \rangle}^p\d x.\]
By Proposition \ref{prop_properties} we can use the dominated convergence theorem to obtain
\begin{align}
	\lim_{s\to 1^-}n\, &\vol{(p(1-s))^{-\frac 1{p s}} \ppp s p f}\\
	&= \lim_{s\to 1^-}\int_{\sn} \Big( p(1-s) \int_0^\infty t^{p(1-s)-1} \Big\|\pl{ \Big( \frac{f(\cdot+t\xi)-f(\cdot)}t \Big)} \Big\|_p^p \d t \Big)^{-\frac n{p s}} \d\xi \\
	&=  \int_{\sn} \Big( \int_{\R^n} \pl{\langle \nabla f(x),\xi \rangle} ^p \d x \Big)^{-\frac n p} \d\xi\\[4pt]
	&= n\, \vol{ \app pf},
\end{align}
and
\begin{align}
	\lim_{s\to 1^-} n p (1-s)\tilde V_{-p s}(K, \ppp{s} p f)
	&= \lim_{s\to 1^-}p(1-s)  \int_{\sn} \Vert \xi\Vert_K^{n+p s} \Vert\xi\Vert_{\ppp{s}p f}^{p s} \d\xi\\
    &=   \int_{\sn} \Vert \xi\Vert_K^{n} \,\Vert\xi\Vert_{\app p f}^p \d\xi\\
    &=  n\,\tilde V_{-p}(K, \app p f),
\end{align}
which completes the proof of the theorem.
\end{proof}

\goodbreak
The following result is an immediate consequence of Theorem \ref{thm_limitaM} and \eqref{eq_plusminus}.

\begin{theorem}\label{thm_limitM}
	Let $f \in W^{1,p} (\R^n)$. For $\xi\in\sn$, 
    \[\lim_{s\to 1^-} (p(1-s))^{\frac1p}\Vert\xi \Vert_{\fpp s p f} =  \Vert\xi\Vert_{\opp p f}.\]
	Moreover,
	\[ 
		\lim_{s\to 1^-} p(1-s) \vol{\fpp s p f}^{-\frac{p s}n} =  \vol{\opp pf}^{-\frac p n},
	\]
    and 
    \begin{equation}\label{eq_limitK}
	\lim_{s\to 1^-} p(1-s) \tilde V_{-p s}(K, \fpp{s}p f)=  
	\tilde V_{-p}(K,  \opp p f)
    \end{equation}
    for every star body $K\subset \R^n$.
\end{theorem}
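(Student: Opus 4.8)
The proof of Theorem~\ref{thm_limitM} is essentially an application of the already-established Theorem~\ref{thm_limitaM} together with the additivity relation \eqref{eq_plusminus}. The plan is as follows. First I would record that, by \eqref{eq_plusminus}, for every $\xi\in\sn$ one has
\begin{equation}
\Vert\xi\Vert_{\fpp s p f}^{p s} = \Vert\xi\Vert_{\ppp s p f}^{p s} + \Vert\xi\Vert_{\ppm s p f}^{p s} = \Vert\xi\Vert_{\ppp s p f}^{p s} + \Vert\xi\Vert_{\ppp s p {(-f)}}^{p s},
\end{equation}
where the last equality uses $\ppm s p f = \ppp s p {(-f)}$ noted in Section~\ref{sec_fracaprojectionbody}. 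Multiplying by $p(1-s)$ and applying the first assertion of Theorem~\ref{thm_limitaM} to both $f$ and $-f$ gives
\begin{equation}
\lim_{s\to1^-} p(1-s)\Vert\xi\Vert_{\fpp s p f}^{p s} = \Vert\xi\Vert_{\app p f}^{p} + \Vert\xi\Vert_{\apm p f}^{p} = \Vert\xi\Vert_{\opp p f}^{p},
\end{equation}
where the last step is the symmetric analogue of \eqref{eq_plusminus}, namely $\Vert\xi\Vert_{\opp p f}^{p} = \int_{\R^n}\vert\langle\nabla f(x),\xi\rangle\vert^p\d x = \int_{\R^n}\plus{\langle\nabla f(x),\xi\rangle}p\d x + \int_{\R^n}\minus{\langle\nabla f(x),\xi\rangle}p\d x$. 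Since $\Vert\xi\Vert_{\fpp s p f}^{p s}$ is bounded away from $0$ for non-zero $f$ by Proposition~\ref{prop_properties} (and the statement is trivial for $f=0$), taking $p$-th roots yields the pointwise limit $\lim_{s\to1^-}(p(1-s))^{1/p}\Vert\xi\Vert_{\fpp s p f} = \Vert\xi\Vert_{\opp p f}$.

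For the volume statement, I would write $n\vol{(p(1-s))^{-1/(p s)}\fpp s p f} = \int_{\sn}\big(p(1-s)\Vert\xi\Vert_{\fpp s p f}^{p s}\big)^{-n/(p s)}\d\xi$ and pass to the limit under the integral sign. The uniform bound $\fpp s p f\subseteq c\,\B$ from Proposition~\ref{prop_properties} gives a lower bound $\Vert\xi\Vert_{\fpp s p f}\ge c^{-1}$ uniform in $s$, which together with the elementary bound $\Vert\xi\Vert_{\fpp s p f}^{p s}\le 2^p\Vert f\Vert_p^p\,(p s)^{-1}r^{-p s}$-type estimate from the proof of Proposition~\ref{prop_properties} provides a dominating function independent of $s$ near $s=1$; the dominated convergence theorem then applies and, using the pointwise limit just established, produces $\lim_{s\to1^-}p(1-s)\vol{\fpp s p f}^{-p s/n} = \vol{\opp p f}^{-p/n}$. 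The dual mixed volume statement \eqref{eq_limitK} is handled the same way: $n\,\tilde V_{-p s}(K,\fpp s p f) = \int_{\sn}\Vert\xi\Vert_K^{n+p s}\Vert\xi\Vert_{\fpp s p f}^{p s}\d\xi$, multiply by $p(1-s)$, dominate using continuity and positivity of $\rho_K$ on $\sn$ together with the uniform bounds on $\Vert\cdot\Vert_{\fpp s p f}$, and apply dominated convergence with the pointwise limit to get $\tilde V_{-p}(K,\opp p f)$.

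None of these steps presents a genuine obstacle, since the heavy lifting—the pointwise limit for the asymmetric bodies, which in turn rests on the BBM-type one-dimensional limit Lemma~\ref{lem_limit1D} and the weak-convergence Lemma~\ref{lem_limitaL1}—has already been carried out in Theorem~\ref{thm_limitaM}. The only point requiring mild care is the justification of the interchange of limit and integral, but the $s$-independent containments $\fpp s p f\subseteq c\,\B$ and $\ppp s p f\subseteq c\,\B$ from Propositions~\ref{prop_properties} and~\ref{prop_aproperties} were proved precisely to supply the needed domination, so this is routine. Accordingly the proof is short: invoke \eqref{eq_plusminus}, apply Theorem~\ref{thm_limitaM} to $f$ and to $-f$, and combine.
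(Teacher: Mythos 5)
Your proposal is correct and takes essentially the same route as the paper, which presents Theorem~\ref{thm_limitM} precisely as an immediate consequence of Theorem~\ref{thm_limitaM} and the decomposition \eqref{eq_plusminus}: apply the asymmetric result to $f$ and to $-f$, add, and pass to volumes and dual mixed volumes by the same dominated-convergence argument used in the proof of Theorem~\ref{thm_limitaM} (for \eqref{eq_limitK} one can even skip dominated convergence, since $\tilde V_{-p s}(K,\fpp s p f)=\tilde V_{-p s}(K,\ppp s p f)+\tilde V_{-p s}(K,\ppm s p f)$). The only cosmetic inaccuracy is your reference to an ``upper bound of type $2^p\Vert f\Vert_p^p (p s)^{-1}r^{-p s}$'' from Proposition~\ref{prop_properties}, which is not what that proof provides, but this does not affect the argument, whose level of detail matches the paper's.
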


\section{Anisotropic Fractional P\'olya--Szeg\H o Inequalities}

We will establish anisotropic P\'olya--Szeg\H o inequalities for fractional $L^p$ Sobolev norms and their asymmetric counterparts.

\begin{theorem}\label{thm_fburchard+}
 	If  $f\in L^p(\R^n)$ is non-negative and $K\subset \R^n$ a star body, then
	\begin{equation}\label{eq_fburchard+}
	\int_{\R^n}\int_{\R^n} \frac{\plus{f(x) - f(y)}p}{\Vert x-y\Vert_K^{n+p s}} \d x \d y \geq \int_{\R^n}\int_{\R^n} \frac{\plus{f^\star(x) - f^\star(y)}p}{\Vert x-y\Vert_{K^\star}^{n+p s}} \d x \d y.
	\end{equation}
Equality holds for non-zero $f\in W^{s, p}(\R^n)$ if and only if $K$ is a centered ellipsoid and $f$ is a translate of  $f^\star\circ \phi$ for some $\phi \in\sln$.
\end{theorem}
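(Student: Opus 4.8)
The plan is to reduce the anisotropic fractional Pólya–Szegő inequality to an integral of Riesz-type rearrangement inequalities, one for each ``layer pairing'' of the superlevel sets of $f$, and then to invoke Burchard's equality characterization (Theorem~\ref{thm_burchard}) fiberwise. First I would fix the star body $K$ and use the layer cake formula \eqref{eq_layer_cake} to write, for $x,y$ with $f(x)>f(y)$,
\[
\plus{f(x)-f(y)}p = p(p-1)\int_0^\infty\!\!\int_0^\infty (u-v)_+^{p-2}\,\chi_{\{f\ge u\}}(x)\,\chi_{\{f< v\}^c}(y)\ \text{-type terms},
\]
more precisely expand $a_+^p$ as a double integral against the kernel $(u-v)_+^{p-2}$ over the region $\{u>v\}$, so that
\[
\plus{f(x)-f(y)}p = p(p-1)\int\!\!\int_{v<u} (u-v)^{p-2}\,\chi_{\{f\ge u\}}(x)\,\chi_{\{f\le v\}}(y)\,\d u\,\d v.
\]
Substituting this and $\Vert x-y\Vert_K^{-n-ps} = c_{n,s}\int_0^\infty \chi_{tK}(x-y)\,t^{-n-ps-1}\,\d t$ (using that $1/\Vert z\Vert_K^{n+ps}$ is, up to a constant, a superposition of indicators of dilates of $K$), and applying Fubini, the left side of \eqref{eq_fburchard+} becomes a positive-weighted integral over $(u,v,t)$ of terms
\[
I(u,v,t)=\int_{\R^n}\int_{\R^n}\chi_{\{f\ge u\}}(x)\,\chi_{tK}(x-y)\,\chi_{\{f\le v\}}(y)\,\d x\,\d y.
\]
Since $\{f\le v\}$ has infinite measure in general, I would instead pair $\{f\ge u\}$ with $\{f\ge v\}$ after an elementary rewriting (replacing $\chi_{\{f\le v\}}$ using $\chi_{\R^n}-\chi_{\{f>v\}}$ and noting the $\chi_{\R^n}$-term cancels or is handled by a limiting/truncation argument), reducing everything to Riesz triple integrals with the three sets $\{f\ge u\}$, $tK$, $\{f\ge v\}$.

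The core inequality step is then Theorem~\ref{thm_BLL}: for each $(u,v,t)$,
\[
I(u,v,t)\le \int_{\R^n}\int_{\R^n}\chi_{\{f\ge u\}^\star}(x)\,\chi_{(tK)^\star}(x-y)\,\chi_{\{f\ge v\}^\star}(y)\,\d x\,\d y,
\]
and since $(tK)^\star = t K^\star$ and $\{f\ge u\}^\star = \{f^\star\ge u\}$ by the definition of the Schwarz symmetral, integrating back over $(u,v,t)$ against the same positive weights reconstructs exactly the right side of \eqref{eq_fburchard+}. This proves the inequality. The finiteness issues (the superlevel sets of $f$ itself have finite measure since $f\in L^p$, but one must check the $u,v\to0$ behaviour and the contribution of large $t$) I expect to be routine given that the right side is assumed, or shown, finite when $f\in W^{s,p}$; for general non-negative $f\in L^p$ the inequality holds with both sides possibly infinite.

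For the equality case, assume $f\in W^{s,p}(\R^n)$ is non-zero and equality holds. Then the pointwise Riesz inequality for $I(u,v,t)$ must be an equality for almost every triple $(u,v,t)$ in the region where the weight $(u-v)^{p-2}t^{-n-ps-1}$ is positive, i.e.\ for a.e.\ $u>v>0$ and a.e.\ $t>0$. Fix such a $t$ and two values $u>v$ with $\{f\ge u\}$, $\{f\ge v\}$ of positive finite measure; if $t$ is chosen so that the radii $\alpha,\beta$ of $\{f\ge u\}^\star,\{f\ge v\}^\star$ and $\gamma = t\,\rho_{K^\star}$ satisfy the triangle condition $|\alpha-\beta|<\gamma<\alpha+\beta$ (which holds for a set of $t$ of positive measure, since $\alpha,\beta$ are fixed positive numbers), Burchard's theorem forces $\{f\ge u\}$, $K$ (rescaled), and $\{f\ge v\}$ to be homothetic copies of one common centered ellipsoid $D$, with the center of the middle one forced to be $0$ (as $K$ is star-shaped about $0$) and the centers $a_u$, $a_v$ of the superlevel sets satisfying $a_u = a_v$ (from $c=a+b$ with $b=0$). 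Thus $K$ is a centered ellipsoid, and all superlevel sets of $f$ (of positive finite measure) are ellipsoids $a_0 + \lambda(u) D$ with a common center $a_0$ and common shape $D$; writing $D = \phi \B$ with $\phi\in\sln$ (rescaling $D$ to have the right volume normalization and absorbing the dilation into $\phi$), this says $f$ is a translate of $f^\star\circ\phi$. Conversely, if $K$ is a centered ellipsoid and $f$ is a translate of $f^\star\circ\phi$ with $\phi\in\sln$, a change of variables $x\mapsto\phi x$ (which preserves Lebesgue measure and maps $K^\star$-norm to $K$-norm after matching the ellipsoids) shows equality directly.

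The main obstacle I anticipate is \emph{not} the inequality itself but the bookkeeping in the equality case: making rigorous that ``equality a.e.\ in $(u,v,t)$'' can be upgraded to ``Burchard's rigidity holds simultaneously for all pairs $u>v$ on a common ellipsoid.'' The subtlety is that Burchard's theorem gives, for each admissible triple, \emph{some} ellipsoid $D_{u,v}$, and one must argue these are all the same — this follows by transitivity: if $\{f\ge u_1\}$ and $\{f\ge u_2\}$ are each homothetic to $K^\star$'s defining ellipsoid (for suitable $t$), they are homothetic to each other with matching centers. One also needs the measure-theoretic care that the conclusion holds for a.e.\ level $u$ and then passes to \emph{every} level by monotonicity/continuity of $u\mapsto\{f\ge u\}$, and that the degenerate levels (where a superlevel set has measure $0$ or where the triangle inequality fails for all $t$) do not obstruct the conclusion. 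Handling the center alignment carefully, using that the middle set $tK$ is genuinely centered at the origin, is what pins down that $f$ is a \emph{single} translate of $f^\star\circ\phi$ rather than a function whose level sets drift.
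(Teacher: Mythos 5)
Your proof of the inequality follows essentially the paper's route: a layer-cake expansion of $\plus{f(x)-f(y)}{p}$ into indicators of superlevel sets, writing $\Vert x-y\Vert_K^{-n-ps}$ as a superposition of indicators of dilates of $K$, replacing $\chi_{\{f\le v\}}$ by $1-\chi_{\{f\ge v\}}$, and applying Riesz's rearrangement inequality layerwise (the paper keeps one layer as the weight $\plus{f(x)-r}{p-1}$ and only expands the second layer for the equality analysis, but this is cosmetic). One clarification: the $\chi_{\R^n}$-term neither cancels nor needs a truncation argument; for each fixed $(u,v,t)$ it equals $\vert tK\vert\,\vert\{f\ge u\}\vert<\infty$, so the splitting is legitimate, and this term is invariant under symmetrization, which together with Riesz applied to the subtracted term gives the stated direction. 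With that reading, the inequality part is sound.

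The genuine gap is in the equality case, exactly at the point you flagged. You assert that the center of the middle ellipsoid in Burchard's conclusion is forced to be $0$ ``as $K$ is star-shaped about $0$''. That inference is false: an ellipsoid containing the origin (for instance a ball centered off the origin) is star-shaped with respect to the origin without being centered, so at this stage you have only shown that $K$ is an ellipsoid, not a \emph{centered} one — which is part of the conclusion — and without $b=0$ the relation $c=a+b$ does not give $a=c$, so the level-set centers could a priori drift. The repair is available inside your own setup and is what the paper does: fix an admissible pair $u>v$; Burchard applies for all $t$ in a set of positive measure, and writing $tK=b(t)+\beta(t)D$ one has $b(t)=t\,b_K$ with $b_K$ the fixed center of $K$, while the centers $a$ and $c$ of the two superlevel sets do not depend on $t$; since $b(t)=c-a$ is then constant in $t$ and $t$ varies, $b_K=0$. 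This yields simultaneously that $K$ is a centered ellipsoid and that $a=c$, after which your transitivity argument (common ellipsoid $D$ determined by the shape of $K$, common center over all levels, passage from a.e.\ levels to all levels by monotonicity) goes through as you describe, and the converse direction by a measure-preserving change of variables is routine.
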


\begin{proof}
Writing 
    \[
    \|z\|_K^{-n-p s} = \int_0^\infty k_t(z) \d t
    \] 
where $k_t(z) = \chi_{t^{- 1/({n+p s})}K}(z)$,
we obtain
\begin{align}
    \int_{\R^n}\int_{\R^n} \frac{\plus{f(x) - f(y)}p}{\Vert x-y\Vert_K^{n+p s}} \d x \d y 
    = \int_0^\infty \int_{\R^n}\int_{\R^n} \plus{f(x) - f(y)}p k_t(x-y) \d x \d y \d t.
\end{align}
Note that
    \[
    \plus{f(x) - f(y)}{p} = p \int_{0}^{\infty} \plus{f(x)-r}{p-1} \chi_{\{f < r\}}(y) \d r.
    \]
Hence, for $t>0$, it follows from Fubini's theorem that
\begin{align}
    \int_{\R^n}\int_{\R^n} &\plus{f(x) - f(y)}{p}\, k_t(x-y) \d x \d y\\
    &=p \int_{0}^{\infty} \int_{\R^n} \int_{\R^n} \plus{f(x)-r}{p-1} k_t(x-y) \chi_{\{f < r\}}(y) \d x \d y \d r\\
    &=  p \int_{0}^{\infty} \int_{\R^n} \int_{\R^n} \plus{f(x)-r}{p-1} k_t(x-y) (1-\chi_{\{f \ge r\}}(y)) \d x \d y \d r.
\end{align}
Let $r,t>0$. Note that $ \int_{\R^n} \plus{f(x)-r}{p-1} \d x<\infty$ and that
\begin{align}
    &\int_{\R^n} \int_{\R^n} \plus{f(x)-r}{p-1} k_t(x-y) (1-\chi_{\{f \ge r\}}(y)) \d x \d y\\
    &=  p \,\Vert k_t\Vert_1  \int_{\R^n} \plus{f(x)-r}{p-1} \d x 
     - p \int_{\R^n} \int_{\R^n} \plus{f(x)-r}{p-1} k_t(x-y) \chi_{\{f \ge r\}}(y) \d x \d y.
\end{align}
The first term is finite since $\{f > r\}$ has finite measure, $f \in L^{\frac{n p}{n - p s}}(\R^n)$ and $\frac{n p}{n - p s} > p-1$.
Clearly the first term  is invariant under Schwarz symmetrization. For the second term, by the Riesz rearrangement inequality, Theorem \ref{thm_BLL}, we have
\begin{multline}\label{eq_BLLp}
    \int_{\R^n} \int_{\R^n} \plus{f(x)-r}{p-1} k_t(x-y) \chi_{\{f \ge r\}}(y) \d x \d y  \\
    \le
    \int_{\R^n} \int_{\R^n} \plus{f^\star(x)-r}{p-1} k_t^\star(x-y) \chi_{\{f^\star \ge r\}}(y) \d x \d y
\end{multline}
for $r, t>0$. Note that
    \[
    \plus{f(x) - r}{p-1} = (p-1) \int_{0}^{\infty} \plus{\tilde r-r}{p-2} \chi_{\{f \geq \tilde r\}}(x) \d \tilde r
    \]
and that the corresponding equation  holds for $f^\star$. Hence, if there is equality in \eqref{eq_fburchard+}, then, for  $(\tilde r,r, t)\in (0,\infty)^3\backslash M$ with $\vol{M}=0$, we have
\begin{equation}
\begin{aligned}
    \int_{\R^n} \int_{\R^n}   &\chi_{\{f \ge \tilde r\}}(x)  \chi_{t^{- 1/({n+p s})}K}(x-y) \chi_{\{f \ge r\}}(y) \d x \d y  \\
    &=
    \int_{\R^n} \int_{\R^n}  \chi_{\{f^\star \ge \tilde r\}}(x) \chi_{t^{- 1/({n+p s})}K^\star}(x-y) \chi_{\{f^\star \ge r\}}(y) \d x \d y.
\end{aligned}
\end{equation}
    
For almost every $(\tilde r, r)\in(0,\infty)^2$, we have $(\tilde r,r,t)\in (0,\infty)^3\backslash M$ for almost every $t>0$.
For  such $(\tilde r, r)$ with $\tilde r\le r$ and $t>0$ sufficiently large, the assumptions of Theorem \ref{thm_burchard} are fulfilled and therefore there are a centered ellipsoid $D$ and $a, b\in\R^n$ (depending on $(\tilde r, r, t)$)  such that
\begin{equation}
    \{f \ge \tilde r\}= a + \alpha D,\quad t^{-1/({n+p s})} K=  b + \beta D,\quad \{f \ge r\}= c + \gamma D
\end{equation}
where $c= a+ b$. 
Since $ K=  t^{1/({n+ps})}b + (\vol{K}/\vol{D})^{1/n} D$, the centered ellipsoid $D$ does not depend on $(\tilde r,r,t)$ and also $a,c$ do not depend on $t$. It follows that $b=0$ and  that $K$ is a multiple of $D$. Hence, $a=c$ is a constant vector which concludes the proof.
\end{proof}
 
 The following result is a variation of \cite[Theorem 3.1]{andreas}.

\begin{theorem}\label{thm_fburchard}
 	If  $f\in L^p(\R^n)$ is non-negative and $K\subset \R^n$ a star body, then
	\begin{equation}\label{eq_fburchard}
	\int_{\R^n}\int_{\R^n} \frac{|f(x) - f(y)|^p}{\Vert x-y\Vert_K^{n+p s}} \d x \d y \geq \int_{\R^n}\int_{\R^n} \frac{|f^\star(x) - f^\star(y)|^p}{\Vert x-y\Vert_{K^\star}^{n+p s}} \d x \d y.
	\end{equation}
    Equality holds for non-zero $f\in W^{s, p}(\R^n)$ if and only if $K$ is a centered ellipsoid and $f$ is a translate of  $f^\star\circ \phi$ for some $\phi \in\sln$.
\end{theorem}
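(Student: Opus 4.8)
The plan is to reduce Theorem \ref{thm_fburchard} to the asymmetric version, Theorem \ref{thm_fburchard+}, which has already been established. The key observation is the decomposition $|a|^p = \plus{a}{p} + \minus{a}{p}$ for $a \in \R$, together with the symmetry $\minus{f(x)-f(y)}{p} = \plus{f(y)-f(x)}{p}$. Writing out the double integral and using the change of variables $(x,y)\mapsto(y,x)$ in the second term, one gets
\begin{equation}
\int_{\R^n}\int_{\R^n} \frac{|f(x) - f(y)|^p}{\Vert x-y\Vert_K^{n+p s}} \d x \d y
= \int_{\R^n}\int_{\R^n} \frac{\plus{f(x) - f(y)}{p}}{\Vert x-y\Vert_K^{n+p s}} \d x \d y
+ \int_{\R^n}\int_{\R^n} \frac{\plus{f(x) - f(y)}{p}}{\Vert (-x)-(-y)\Vert_K^{n+p s}} \d x \d y,
\end{equation}
and since $\Vert -z\Vert_K = \Vert z\Vert_{-K}$, the second term is the asymmetric functional with $K$ replaced by $-K$. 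Applying Theorem \ref{thm_fburchard+} to each of the two terms (noting $(-K)^\star = K^\star$ and that Schwarz symmetrization of $f$ is unaffected) yields the inequality \eqref{eq_fburchard}, because $\plus{f^\star(x)-f^\star(y)}{p} + \plus{f^\star(y)-f^\star(x)}{p} = |f^\star(x)-f^\star(y)|^p$ recombines the two symmetrized terms.

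For the equality characterization, the ``if'' direction is a direct computation: if $K$ is a centered ellipsoid and $f$ is a translate of $f^\star\circ\phi$ with $\phi\in\sln$, then both sides agree by the affine covariance of the anisotropic norm under volume-preserving linear maps and translation invariance of the double integral. For the ``only if'' direction, equality in \eqref{eq_fburchard} forces equality in \emph{both} of the two asymmetric inequalities simultaneously (since each is an inequality in the same direction and they sum to the claimed one). Equality in the inequality of Theorem \ref{thm_fburchard+} applied with $K$ then already gives that $K$ is a centered ellipsoid and $f$ is a translate of $f^\star\circ\phi$ for some $\phi\in\sln$, which is exactly the desired conclusion; the second equality (with $-K$) is then automatically consistent since $-K$ is the same centered ellipsoid.

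The main subtlety to watch is the hypothesis $f \in W^{s,p}(\R^n)$ needed to invoke the equality case of Theorem \ref{thm_fburchard+}: one must check that finiteness of the symmetric anisotropic norm \eqref{eq_fburchard} implies finiteness of each asymmetric norm, which is immediate from $\plus{a}{p}, \minus{a}{p} \le |a|^p$. There is also a small bookkeeping point that equality in the symmetric inequality could a priori hold with one asymmetric side having value $+\infty$; but under the standing assumption $f \in W^{s,p}(\R^n)$ both asymmetric sides are finite, so equality in the sum genuinely forces equality termwise. Beyond that, the argument is essentially formal — no new rearrangement input is required, as all the analytic work (the Riesz inequality and Burchard's equality case) has been done in the proof of Theorem \ref{thm_fburchard+}.
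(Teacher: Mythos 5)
Your proposal is correct and follows essentially the same route as the paper: decompose $|a|^p=\plus{a}{p}+\minus{a}{p}$, identify the negative-part functional with the positive-part functional for $-K$, and apply Theorem \ref{thm_fburchard+} to $K$ and $-K$ (using $(-K)^\star=K^\star$), with the equality case forced termwise. Your extra remarks on finiteness and the termwise equality are fine but are exactly the implicit bookkeeping in the paper's two-line proof.
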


\begin{proof}
Since 
    \[ \int_{\R^n}\int_{\R^n} \frac{\minus{f(x) - f(y)}p}{\Vert x-y\Vert_K^{n+p s}} \d x \d y=
    \int_{\R^n}\int_{\R^n} \frac{\plus{f(x) - f(y)}p}{\Vert x-y\Vert_{-K}^{n+p s}} \d x \d y,\]
the result follows from Theorem \ref{thm_fburchard+} for $K$ and $-K$.
\end{proof}

\section{Affine Fractional P\'olya--Szeg\H o Inequalities} 
 
We establish affine P\'olya--Szeg\H o inequalities for fractional asymmetric and symmetric $L^p$ polar projection bodies.

\begin{theorem}\label{thm_aPS+}
	If  $f\in W^{s,p}(\R^n)$ is non-negative, then
	\begin{equation}
	\vol{\ppp s p  f}^{-p s/n} \geq \vol{\ppp s p f^\star}^{- p s/n}.
	\end{equation}
    Equality holds if and only if $f$ is a translate of  $f^\star\circ \phi$ for some $\phi \in\sln$.
\end{theorem}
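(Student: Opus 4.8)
The plan is to reduce this affine Pólya--Szegő inequality for the asymmetric fractional $L^p$ polar projection body to the anisotropic inequality already proved in Theorem~\ref{thm_fburchard+}, combined with the dual mixed volume machinery. The key identity is \eqref{eq_dualmixed_func+}, which says that for any star body $K$,
\begin{equation}
\int_{\R^n}\int_{\R^n} \frac{\plus{f(x)-f(y)}p}{\|x-y\|_K^{n+p s}} \d x \d y = n\,\tilde V_{-p s}(K, \ppp s p f).
\end{equation}
The idea is to choose $K$ cleverly so that this mixed volume collapses to a genuine volume. Specifically, I would take $K$ to be a dilate of the polar-type body $\ppp s p f$ itself (or rather, the star body whose radial function realizes the supremum in the dual mixed volume inequality \eqref{eq_mixedvolume}). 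Since $\tilde V_{-p s}(K,K)=\vol K$, plugging $K=\ppp s p f$ into the identity gives $n\vol{\ppp s p f}$ on the right, while the left side becomes the anisotropic asymmetric fractional Sobolev norm of $f$ with respect to $\ppp s p f$.

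Concretely, the chain of inequalities I would assemble is: first, by the dual mixed volume inequality \eqref{eq_mixedvolume} applied with $\alpha = -p s$ (after checking the sign conventions — one needs the version of Hölder giving $\tilde V_{-p s}(K,L)\ge \vol K^{(n+p s)/n}\vol L^{-p s/n}$ for the negative exponent, with equality iff $K,L$ are dilates), one relates $\vol{\ppp s p f}$ to $\tilde V_{-p s}(K,\ppp s p f)$ for an arbitrary star body $K$. Second, $n\,\tilde V_{-p s}(K,\ppp s p f)$ equals the anisotropic norm by \eqref{eq_dualmixed_func+}, and Theorem~\ref{thm_fburchard+} bounds it below by the same norm for $f^\star$ and $K^\star$. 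Third, one evaluates the latter: since $f^\star$ is radially symmetric, $\ppp s p f^\star$ is a centered ball, so $\tilde V_{-p s}(K^\star, \ppp s p f^\star)$ can be computed explicitly and compared to $\vol{\ppp s p f^\star}$ via the equality case of \eqref{eq_mixedvolume}. Choosing $K$ optimally (so that the first inequality is an equality, i.e. $K$ a dilate of $\ppp s p f$) then yields exactly $\vol{\ppp s p f}^{-p s/n}\ge \vol{\ppp s p f^\star}^{-p s/n}$. This is the standard Lutwak--Yang--Zhang-style argument: optimize over anisotropies, and the affine inequality pops out.

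For the equality characterization: equality in the final inequality forces equality in Theorem~\ref{thm_fburchard+} for the optimal $K$, which by that theorem's equality case means $K$ is a centered ellipsoid and $f$ is a translate of $f^\star\circ\phi$ for some $\phi\in\sln$. One has to check the converse direction is clean — that $f$ of this form indeed gives equality — which follows because $\ppp s p{(f^\star\circ\phi)} = \phi^{-1}\ppp s p f^\star$ (up to the origin-symmetry subtleties, but here we use the linear covariance $\fpp s p(f\circ\phi^{-1})=\phi\,\fpp s p f$ and its asymmetric analogue, together with translation invariance of the construction) and $\phi\in\sln$ preserves volume.

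The main obstacle I anticipate is twofold: first, correctly handling the \emph{negative} exponent $-p s$ in the dual mixed volume inequality — the direction of the inequality reverses compared to the $0<\alpha<n$ case stated in \eqref{eq_mixedvolume}, so one must either invoke the dual Brunn--Minkowski inequality \eqref{eq_dualBM} directly or re-derive the relevant Hölder inequality, and keep careful track of which body plays which role so that the optimal choice of $K$ genuinely forces the first step to be an equality. Second, the equality analysis requires that the optimal $K$ in the outer optimization be a star body to which Theorem~\ref{thm_fburchard+} applies, and that the ellipsoid $D$ produced there be consistent with $f^\star$ being a \emph{ball} (not a general ellipsoid) — this is what pins down $\phi\in\sln$ rather than $\gln$, and it needs the normalization to be tracked precisely. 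Both are routine but error-prone bookkeeping rather than conceptual difficulties.
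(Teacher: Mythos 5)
Your proposal is correct and follows essentially the same route as the paper: convert the asymmetric anisotropic seminorm into the dual mixed volume $\tilde V_{-ps}(K,\ppp s p f)$ via \eqref{eq_dualmixed_func+}, apply Theorem \ref{thm_fburchard+} together with the negative-exponent (H\"older) form of the dual mixed volume inequality, and then take $K=\ppp s p f$ so that $\tilde V_{-ps}(K,K)=\vol{\ppp s p f}$, with the equality case inherited from Theorem \ref{thm_fburchard+}. The points you flag (sign of the exponent, covariance of $\ppp s p$ under $\sln$) are handled exactly as you anticipate.
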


\begin{proof}         
By Theorem \ref{thm_fburchard+}, \eqref{eq_dualmixed_func+} and the dual mixed volume inequality, we obtain for $K\subset \R^n$ a star body that        
\begin{align}                
    \tilde V_{-p s} (K, \ppp s p f)                
    &\geq \tilde V_{-p s} (K^\star, \ppp s p  f^\star)\\\label{eq_aps+}                
    &\geq \vol{K^\star}^{({n+p s})/{n}} \vol{\ppp s p  f^\star}^{- p s/n}\\                
    &= \vol{K}^{({n+p s})/{n}} \vol{\ppp s p  f^\star}^{-  p s/n}.        
\end{align}        
Setting $K = \ppp s p f$, we  see that        
\begin{equation}                
    \vol{\ppp s p f}                
    = \tilde V_{-p s} (\ppp s p  f, \ppp s p f)
    \geq \vol{\ppp s p  f}^{({n+p s})/{n}} \vol{\ppp s p f^\star}^{-p s/n},        
\end{equation}        
which completes the proof of the inequality. By Theorem~\ref{thm_fburchard+}, there is equality in \eqref{eq_aps+} if and only if $f$ is a translate of  $f^\star\circ \phi$ for some $\phi \in\sln$.
\end{proof}

\goodbreak
The following result is obtained in the same way as Theorem \ref{thm_aPS+} by replacing Theorem~\ref{thm_fburchard+} with Theorem \ref{thm_fburchard}.

\begin{theorem}\label{thm_aPS}
	If  $f\in L^p(\R^n)$ is non-negative, then
	\begin{equation}
	\vol{\fpp s p  f}^{-p s/n} \geq \vol{\fpp s p f^\star}^{- p s/n}.
	\end{equation}
    Equality holds for $f\in W^{s, p}(\R^n)$ if and only if $f$ is a translate of  $f^\star\circ \phi$ for some $\phi \in\sln$.
\end{theorem}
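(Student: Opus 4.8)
The plan is to transcribe the proof of Theorem~\ref{thm_aPS+} essentially verbatim, using the symmetric anisotropic fractional P\'olya--Szeg\H o inequality (Theorem~\ref{thm_fburchard}) and the identity \eqref{eq_dualmixed_func} in place of their asymmetric counterparts, Theorem~\ref{thm_fburchard+} and \eqref{eq_dualmixed_func+}. The one genuinely new point is that the statement is made for all non-negative $f\in L^p(\R^n)$, whereas Theorem~\ref{thm_aPS+} was restricted to $W^{s,p}(\R^n)$, so I would first dispose of the case $f\in L^p(\R^n)\setminus W^{s,p}(\R^n)$. There $\int_{\sn}\Vert\xi\Vert_{\fpp s p f}^{p s}\d\xi=+\infty$ by \eqref{eq_finite}; on the other hand, the iterated quasi-subadditivity of $\Vert\cdot\Vert_{\fpp s p f}^{p s}$ behind \eqref{eq_euclideanbound} shows that if $\Vert e_i\Vert_{\fpp s p f}<\infty$ for the vectors of some basis $\{e_1,\dots,e_n\}$ of $\R^n$, then $\Vert\cdot\Vert_{\fpp s p f}^{p s}$ is bounded on $\sn$, hence integrable---a contradiction. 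Thus $\{x\in\R^n\setminus\{0\}:\rho_{\fpp s p f}(x)>0\}$ contains no basis and so spans a proper subspace, whence $\rho_{\fpp s p f}=0$ almost everywhere on $\sn$, $\vol{\fpp s p f}=0$, and $\vol{\fpp s p f}^{-p s/n}=+\infty\ge\vol{\fpp s p f^\star}^{-p s/n}$ holds trivially. So it suffices to treat non-negative $f\in W^{s,p}(\R^n)$; then $f^\star\in W^{s,p}(\R^n)$ as well (Theorem~\ref{thm_fburchard} with $K=\B$), and by Proposition~\ref{prop_properties} both $\fpp s p f$ and $\fpp s p f^\star$ are star bodies, the latter a centered ball since $f^\star$ is radial.

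For the inequality, fix a star body $K\subset\R^n$ and combine \eqref{eq_dualmixed_func}, Theorem~\ref{thm_fburchard}, \eqref{eq_dualmixed_func} once more applied to $f^\star$ and $K^\star$, and the identity $\tilde V_{-p s}(K^\star,\fpp s p f^\star)=\vol{K^\star}^{(n+p s)/n}\vol{\fpp s p f^\star}^{-p s/n}$ (immediate because $K^\star$ and $\fpp s p f^\star$ are concentric balls, together with $\vol{K^\star}=\vol K$), exactly as in the proof of Theorem~\ref{thm_aPS+} but with $\fpp s p\cdot$ in place of $\ppp s p\cdot$, to get
\begin{equation}
\tilde V_{-p s}(K,\fpp s p f)\ \ge\ \tilde V_{-p s}(K^\star,\fpp s p f^\star)\ =\ \vol{K}^{(n+p s)/n}\,\vol{\fpp s p f^\star}^{-p s/n}.
\end{equation}
Taking $K=\fpp s p f$ and using $\tilde V_{-p s}(\fpp s p f,\fpp s p f)=\vol{\fpp s p f}$ gives $\vol{\fpp s p f}\ge\vol{\fpp s p f}^{(n+p s)/n}\vol{\fpp s p f^\star}^{-p s/n}$, which, since $\vol{\fpp s p f}\in(0,\infty)$, rearranges to the claimed inequality.

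For the equality case, suppose $f\in W^{s,p}(\R^n)$ is non-zero with $\vol{\fpp s p f}=\vol{\fpp s p f^\star}$. Then the displayed chain with $K=\fpp s p f$ becomes a chain of equalities; in particular Theorem~\ref{thm_fburchard} holds with equality for the star body $K=\fpp s p f$, so its rigidity part forces $\fpp s p f$ to be a centered ellipsoid and $f$ to be a translate of $f^\star\circ\phi$ for some $\phi\in\sln$. Conversely, if $f$ is a translate of $f^\star\circ\phi$ with $\phi\in\sln$, then $\fpp s p f=\phi^{-1}\fpp s p f^\star$, using that $\fpp s p\cdot$ is invariant under translations of the function and that $\fpp s p(g\circ\phi^{-1})=\phi\,\fpp s p g$ for volume-preserving $\phi$; hence $\vol{\fpp s p f}=\vol{\fpp s p f^\star}$ and equality holds.

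I expect the only non-routine point to be the equality analysis---specifically the observation that equality in the volume inequality is \emph{equivalent} to equality in Theorem~\ref{thm_fburchard} for the single test body $K=\fpp s p f$, after which the rigidity already established for Theorem~\ref{thm_fburchard} does everything. The reduction handling $f\notin W^{s,p}(\R^n)$ is the only ingredient beyond a verbatim copy of the proof of Theorem~\ref{thm_aPS+}, and it is elementary.
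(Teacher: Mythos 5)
Your proposal is correct and follows essentially the same route as the paper, whose proof of this theorem is literally the proof of Theorem~\ref{thm_aPS+} with Theorem~\ref{thm_fburchard+} and \eqref{eq_dualmixed_func+} replaced by Theorem~\ref{thm_fburchard} and \eqref{eq_dualmixed_func}, including the choice $K=\fpp s p f$ and the transfer of the equality case to the rigidity in Theorem~\ref{thm_fburchard}. Your preliminary reduction for $f\in L^p(\R^n)\setminus W^{s,p}(\R^n)$ (showing $\vol{\fpp s p f}=0$ via the quasi-subadditivity argument) is a small, correct addition that the paper leaves implicit.
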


We remark that by Theorem \ref{thm_limitM} we obtain from Theorem \ref{thm_aPS} in the limit as $s\to1^-$ that
\begin{equation}
	\vol{\opp  p  f}^{-p /n} \geq \vol{\opp  p f^\star}^{- p /n},
\end{equation}
which is equivalent to the P\'olya--Szeg\H o inequality for $L^p$ projection bodies by Cianchi, Lutwak, Yang, and Zhang \cite[Theorem 2.1]{CLYZ2009}. Similarly, by Theorem \ref{thm_limitaM} we obtain from Theorem \ref{thm_aPS+} in the limit as $s\to1^-$ that
\begin{equation}
	\vol{\app  p  f}^{-p /n} \geq \vol{\app  p f^\star}^{- p /n},
\end{equation}
which is equivalent to the P\'olya--Szeg\H o inequality for asymmetric $L^p$ projection bodies by Haberl, Schuster and Xiao  \cite[Theorem 1]{Haberl:Schuster3}.

\section{Affine Fractional Asymmetric $L^p$ Sobolev Inequalities}\label{sec_affine_asymmetric}

We establish the following affine fractional asymmetric $L^p$ Sobolev inequalities and show that they are stronger than Theorem \ref{thm_afracsobo}. 

\begin{theorem}\label{thm_afracsobo+}
    Let $0<s<1$ and $1<p<n/s$. For non-negative $f\in W^{s,p}(\R^n)$, 
    \begin{align}
    \Vert f\Vert_{{\frac {n p}{n-p s}}}^p
    \le
    2\,\sigma_{n,p,s} n\omega_n^{\frac{n+p s}n}\vol{\ppp s p f}^{- \frac{p s}n}
    \le
   2 \sigma_{n,p,s}\int_{\R^n} \int_{\R^n} \frac{\plus{f(x)-f(y)}{p}}{\vert x-y \vert^{n+p s}}\d x\d y.
    \end{align}
    There is equality in the first inequality if and only if $f=h_{s,p}\circ \phi$ for some $\phi\in\gln$ where $h_{s,p}$ is an extremal function of \eqref{eq_fracsob}. There is equality in the second inequality if $f$ is radially symmetric.
\end{theorem}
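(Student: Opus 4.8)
The plan is to combine three ingredients: the anisotropic (asymmetric) P\'olya--Szeg\H o inequality of Theorem~\ref{thm_fburchard+}, the optimal choice of the star body $K$ in the anisotropic fractional norm, and the known sharp symmetric fractional $L^p$ Sobolev inequality \eqref{eq_fracsob}. First I would observe that the second inequality in the statement is essentially a definition unpacked: by \eqref{eq_dualmixed_func+} with $K=\B$, the right-hand integral equals $n\,\tilde V_{-ps}(\B,\ppp s p f)$, and the dual mixed volume inequality \eqref{eq_mixedvolume} (with $\alpha=-ps<0$, so the inequality reverses in the way recorded there) gives
\[
n\,\tilde V_{-ps}(\B,\ppp s p f)\ge n\,\vol{\B}^{(n+ps)/n}\vol{\ppp s p f}^{-ps/n}
= n\,\omega_n^{(n+ps)/n}\vol{\ppp s p f}^{-ps/n},
\]
with equality when $\B$ and $\ppp s p f$ are dilates, i.e. when $\ppp s p f$ is a centered ball, which holds in particular if $f$ is radially symmetric. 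That disposes of the second inequality and its (sufficient) equality condition.

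The first inequality is the affine heart of the matter. The strategy is: apply Theorem~\ref{thm_fburchard+} with a cleverly chosen star body $K=K_f$ so that the anisotropic asymmetric norm of $f$ with respect to $K_f$ becomes comparable to $\vol{\ppp s p f}^{-ps/n}$, and then the anisotropic norm of $f^\star$ with respect to $K_f^\star$ (a ball) is just a multiple of the classical symmetric fractional norm of $f^\star$, to which \eqref{eq_fracsob} applies. Concretely I would take $K_f$ to be the polar-type body associated to $\ppp s p f$ — the star body whose radial function is $\rho_{K_f}(\xi)=\rho_{\ppp s p f}(\xi)^{-1}$ up to normalization — so that, using \eqref{eq_dualmixed_func+},
\[
\int_{\R^n}\int_{\R^n}\frac{\plus{f(x)-f(y)}p}{\|x-y\|_{K_f}^{n+ps}}\d x\d y
= n\,\tilde V_{-ps}(K_f,\ppp s p f)
= \int_{\sn}\rho_{K_f}(\xi)^{n+ps}\,\rho_{\ppp s p f}(\xi)^{-ps}\d\xi,
\]
and the exponent bookkeeping (choosing the homogeneity of $K_f$ appropriately) collapses this to a constant multiple of $\vol{\ppp s p f}$. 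On the other side, $K_f^\star$ is a centered ball of the right volume, so $\|x-y\|_{K_f^\star}$ is a scalar multiple of $|x-y|$ and the right-hand side of \eqref{eq_fburchard+} becomes that scalar to the power $n+ps$ times $\int\int \plus{f^\star(x)-f^\star(y)}p/|x-y|^{n+ps}$; since $f^\star$ is radially symmetric this last quantity equals $\tfrac12\int\int|f^\star(x)-f^\star(y)|^p/|x-y|^{n+ps}$, to which \eqref{eq_fracsob} applies with $\|f^\star\|_{np/(n-ps)}=\|f\|_{np/(n-ps)}$. Tracking the constants through $\vol{K_f^\star}=\vol{K_f}$ yields exactly $\Vert f\Vert_{np/(n-ps)}^p\le 2\sigma_{n,p,s}n\omega_n^{(n+ps)/n}\vol{\ppp s p f}^{-ps/n}$, i.e. \eqref{eq_fracsobvol} for the asymmetric body with the factor $2$.

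For the equality characterization in the first inequality, I would trace equality backwards through the three steps: equality in the dual-mixed-volume / Cauchy--Schwarz step forces $K_f$ and $\ppp s p f$ to be suitably matched (automatic by the choice of $K_f$), equality in Theorem~\ref{thm_fburchard+} forces $K_f$ to be a centered ellipsoid and $f$ to be a translate of $f^\star\circ\phi$ for some $\phi\in\sln$, and equality in \eqref{eq_fracsob} for $f^\star$ forces $f^\star$ to be a rescaled extremal $h_{s,p}$; composing, $f$ must be $h_{s,p}\circ\psi$ for some $\psi\in\gln$ (absorbing the translation and the dilation of $f^\star$ into the general linear map and using affine invariance of \eqref{eq_fracsobvol}), and conversely every such $f$ achieves equality. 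The main obstacle I anticipate is the clean handling of the degenerate case — ensuring the chosen $K_f$ is genuinely a star body (radial function strictly positive and continuous), which is where Proposition~\ref{prop_aproperties} is needed to guarantee $\ppp s p f$ has the origin in its interior and is bounded, so that $\rho_{K_f}=\rho_{\ppp s p f}^{-1}$ is continuous and positive — and making the constant bookkeeping in the homogeneity normalization of $K_f$ match the asserted constant $2\sigma_{n,p,s}n\omega_n^{(n+ps)/n}$ exactly rather than up to an unspecified factor. A secondary subtlety is that Theorem~\ref{thm_fburchard+}'s equality statement is for $f\in W^{s,p}$, which is exactly our hypothesis, so no extra approximation is required; and the final reduction of equality cases should be stated invoking the affine invariance of \eqref{eq_fracsobvol} noted after that display, together with the fact (from \cite{FrankSeiringer}) that extremals of \eqref{eq_fracsob} are radially symmetric so that $h_{s,p}^\star=h_{s,p}$ up to rescaling.
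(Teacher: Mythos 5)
Your route for the second inequality is exactly the paper's (take $K=\B$ in \eqref{eq_dualmixed_func+} and use the reversed dual mixed volume inequality for the index $-ps$, equality when $\ppp s p f$ is a centered ball), and your plan for the first inequality — Theorem \ref{thm_fburchard+} with a body adapted to $f$, then the sharp Euclidean inequality \eqref{eq_fracsob} for $f^\star$, then equality tracing via Burchard plus radial symmetry of extremals — is in substance the paper's argument, which merely packages the symmetrization step as Theorem \ref{thm_aPS+}. However, your concrete choice of the auxiliary body is wrong, and the key identity you build on it is false. If $\rho_{K_f}=\rho_{\ppp s p f}^{-1}$ (up to a scalar), then
\[
n\,\tilde V_{-p s}(K_f,\ppp s p f)=\int_{\sn}\rho_{K_f}(\xi)^{n+p s}\,\rho_{\ppp s p f}(\xi)^{-p s}\d\xi=\int_{\sn}\rho_{\ppp s p f}(\xi)^{-(n+2 p s)}\d\xi,
\]
which is \emph{not} a constant multiple of $\vol{\ppp s p f}$: the exponent mismatch is pointwise, so no global rescaling ("homogeneity normalization") of $K_f$ can repair it, and the natural Jensen/H\"older comparisons between $\int_{\sn}\rho^{-(n+2ps)}$, $\vol{K_f}=\frac1n\int_{\sn}\rho^{-n}$ and $\vol{\ppp s p f}=\frac1n\int_{\sn}\rho^{n}$ all run in the direction useless for the target bound. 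The choice that works — and is exactly what the paper uses inside the proof of Theorem \ref{thm_aPS+} — is $K_f=\ppp s p f$ itself: then $\rho_{K_f}^{\,n+p s}\rho_{\ppp s p f}^{\,-p s}=\rho_{\ppp s p f}^{\,n}$, so the anisotropic asymmetric norm of $f$ with respect to $K_f$ equals $n\vol{\ppp s p f}$ exactly (Proposition \ref{prop_aproperties} ensures $K_f$ is a genuine star body).

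With that correction the rest of your bookkeeping goes through and gives the stated constant: $K_f^\star$ is the centered ball of radius $r=(\vol{\ppp s p f}/\omega_n)^{1/n}$, Theorem \ref{thm_fburchard+} yields $n\vol{\ppp s p f}\ge r^{\,n+p s}\int_{\R^n}\int_{\R^n}\plus{f^\star(x)-f^\star(y)}{p}\,|x-y|^{-n-p s}\d x\d y$, the factor $2$ comes from the $x\leftrightarrow y$ symmetry of the Euclidean kernel (equivalently, as in the paper, from $\ppp s p f^\star=\ppm s p f^\star$ being a ball together with \eqref{eq_plusminus}), and \eqref{eq_fracsob} applied to $f^\star$ with $\Vert f^\star\Vert_{np/(n-ps)}=\Vert f\Vert_{np/(n-ps)}$ gives $\Vert f\Vert_{np/(n-ps)}^p\le 2\sigma_{n,p,s}\,n\,\omega_n^{(n+ps)/n}\vol{\ppp s p f}^{-ps/n}$. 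Your treatment of the equality cases (equality in Theorem \ref{thm_fburchard+} forcing $f$ to be a translate of $f^\star\circ\phi$ with $\phi\in\sln$, extremality of $f^\star$ in \eqref{eq_fracsob}, and the converse via affine invariance) then coincides with the paper's. So: correct strategy, but the definition of $K_f$ as a reciprocal/polar-type body must be replaced by $K_f=\ppp s p f$; as written, the central identity in your first-inequality argument fails.
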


\begin{proof}
By Theorem \ref{thm_aPS+},
\begin{equation}
	\vol{\ppp s p  f}^{-p s/n} \geq \vol{\ppp s p f^\star}^{- p s/n},
\end{equation}
with equality if $f$ is a translate of  $f^\star\circ \phi$ for some $\phi \in\sln$.	
Since $f^\star$ is radially symmetric, $\ppp s p f^\star=\ppm s p f^\star$ is a ball. Hence, it follows from  \eqref{eq_dualmixed_func+} 
that
\begin{align}
2 n\omega_n^{\frac{n+p s}n} \vol{\ppp s p f^\star}^{-\frac{p s}n} 
&=  2 \int_{\R^n} \int_{\R^n} \frac{\plus{ f^\star(x)-f^\star(y)}p}{\vert x-y \vert^{n+p s}}\d x\d y\\
&=  \int_{\R^n} \int_{\R^n} \frac{\vert f^\star(x)-f^\star(y)\vert^p}{\vert x-y \vert^{n+p s}}\d x\d y.
\end{align}
The fractional Sobolev inequality \eqref{eq_fracsob} shows that
\begin{equation}
    \sigma_{n,p,s}\int_{\R^n} \int_{\R^n} \frac{\vert f^\star(x)-f^\star(y)\vert^p}{\vert x-y \vert^{n+p s}}\d x\d y\ge \Vert f^\star\Vert_{{\frac {n p}{n-p s}}}^p.
\end{equation}
Combining these inequalities and their equality cases, we complete the proof of the first inequality of the theorem.

For the second inequality, we set $K=\B$ in \eqref{eq_dualmixed_func+} and apply the dual mixed volume inequality \eqref{eq_mixedvolume} to obtain
\begin{equation}
    \int_{\R^n} \int_{\R^n} \frac{\plus{f(x)-f(y)}{p}}{\vert x-y \vert^{n+p s}}\d x\d y = n \tilde V_{-p s}(\B, \ppp s p f)\ge n \omega_n^{\frac{n+p s}{n}}\vol{\ppp s p f}^{-\frac{p s}n}.
\end{equation}
There is equality precisely if $\ppp s p f$ is a ball, which is the case for radially symmetric functions.
\end{proof}

Note that it follows from the definition of fractional symmetric and asymmetric $L^p$ polar projection bodies that 
\begin{equation}   
    \fpp s p f =\ppp s p f \ra{-p s} \ppm s p f. 
\end{equation}
We use the dual Brunn--Minkowski inequality \eqref{eq_dualBM} and obtain that
\begin{align}
    \vol{\fpp s p  f}^{-\frac{p s}n} 
    \ge  \vol{\ppp s p f }^{-\frac{p s}n} +\vol{ \ppm s p f}^{-\frac{p s}n},
\end{align}
with equality precisely if the star bodies $\ppp s p f$ and $\ppm s p f$ are dilates. Thus, it follows that for non-negative $f$,  Theorem~\ref{thm_afracsobo+} implies Theorem \ref{thm_afracsobo}  and  it is, in general, substantially stronger than Theorem \ref{thm_afracsobo}. Of course, they coincide for even functions.

\section{Affine Fractional $L^p$ Sobolev Inequalities: Proof of Theorem \ref{thm_afracsobo}}\label{sec_affine_symmetric}

For non-negative $f$, the first inequality in Theorem \ref{thm_afracsobo} follows from Theorem \ref{thm_afracsobo+}, as mentioned before.
For general $f$ and $x,y\in\R^n$, we use 
    \[\vert f(x) - f(y) \vert \geq \big\vert \vert f(x) \vert - \vert f(y) \vert \big\vert,\] 
where equality holds if and only if $f(x)$ and $f(y)$ are both non-negative or non-positive. We obtain
    \[
    \int_{\R^n}\int_{\R^n} \frac{\vert f(x)-f(y)\vert^p}{\vert x-y\vert^{n+s p}}\d x\d y \geq \int_{\R^n}\int_{\R^n} \frac{\big\vert \vert f(x)\vert -\vert f(y)\vert \big\vert^p}{\vert x-y\vert^{n+s p}}\d x\d y,
    \]
with equality if and only if $f$ has constant sign for almost every $x,y \in \R^n$. Using the result for $\vert f\vert$, we obtain  the first inequality of the theorem and its equality case.

For the second inequality, we set $K=\B$ in \eqref{eq_dualmixed_func} and apply the dual mixed volume inequality \eqref{eq_mixedvolume} as in the proof of Theorem \ref{thm_afracsobo+}.

\goodbreak
\section{Optimal Fractional $L^p$ Sobolev Bodies}\label{sec_optimal}

The following important question was asked by Lutwak, Yang and Zhang \cite{LYZ2006} for a given $f\in W^{1,p}(\R^n)$ and $1\le p< n$:
For which origin-symmetric convex bodies $K\subset \R^n$ is
\begin{equation}\label{eq_optimalsobolev}
    \inf\Big\{\int_{\R^n}\|\nabla f(x) \|^p_{K^*}\d x: K \text{ origin-symmetric convex body}, \vol{K}=\omega_n\Big\} 
\end{equation}
attained? An optimal $L^p$ Sobolev body of $f$ is a convex body where the infimum is attained.

Lutwak, Yang ang Zhang \cite{LYZ2006} showed that the infimum in \eqref{eq_optimalsobolev} is attained (up to normalization) at the unique origin-symmetric convex body $\os p f$ in $\R^n$ such that 
\begin{equation}\label{eq_LYZ}
    \int_{\sn} g(\xi )\d S_p(\os p f,\xi)=\int_{\R^n} g(\nabla f (x))\d x
\end{equation}
for every even $g\in C(\R^n)$ that is positively homogeneous of degree $p$,
where $S_p(K,\cdot)$ is the $L_p$ surface area measure of  $K$. 
Setting $g=\Vert\cdot\Vert_{K^*}$, they obtain from the $L^p$~Minkowski inequality that
\begin{equation}\label{eq_funMink}
    \frac 1 n\int_{\R^n}\|\nabla f(x) \|^p_{K^*}\d x= V_p(\os p f, K)\ge \vol{\!\os p f\!}^{(n-p)/n} \vol{K}^{p/n},
\end{equation}
with equality precisely if $K$ and $\os p f$ are homothetic (see \cite[Section 9.1]{Schneider:CB2} for the definition of the $L_p$ mixed volume $V_p(\cdot,\cdot)$ and the $L^p$ Minkowski inequality). 
Hence, they obtain from their solution to their functional version \eqref{eq_LYZ} of the $L^p$ Minkowski problem that $\os p f$ is the optimal $L^p$ Sobolev body associated to $f$. Tuo Wang \cite{Tuo_Wang} obtained corresponding results for $f\in BV(\R^n)$ and $p=1$.

\goodbreak
Let $0<s<1$ and $1<p<n/s$. The results by Lutwak, Yang and Zhang \cite{LYZ2006} suggest the following question for  a given $f\in W^{s,p}(\R^n)$: For which star bodies $L\subset\R^n$  is
\begin{equation}\label{eq_optimalfracsobo}
    \inf\Big\{\int_{\R^n} \int_{\R^n} \frac{\vert f(x)-f(y)\vert^p}{\Vert x-y\Vert_L^{n+p s}}\d x\d y: L \text{ star body}, \vol{L}=\omega_n \Big\}
\end{equation}
attained? An optimal $s$-fractional $L^p$ Sobolev body of $f$ is a star body where the infimum is attained.

By \eqref{eq_dualmixed_func} and the dual mixed volume inequality \eqref{eq_mixedvolume},
\begin{equation}
    \frac 1 n\int_{\R^n}\int_{\R^n} \frac{\vert f(x)-f(y)\vert^p}{\Vert x-y\Vert_L^{n+p s}}\d x\d y =\tilde V_{-p s}(L, \fpp{s} p f)\ge \vol{L}^{(n+p s)/n} \vol{\fpp {s} p f}^{-(p s)/n},
\end{equation}
and there is equality precisely if $L$ is a dilate of $\fpp{s}p f$. Hence, $\fpp s p f$ is the unique optimal $s$-fractional $L^p$ Sobolev body associated to $f$.  

\goodbreak
To understand how the solutions to \eqref{eq_optimalsobolev} and \eqref{eq_optimalfracsobo} are related, we use the following result:  For $f\in W^{1,p}(\R^n)$ and $L\subset \R^n$ a star body, 
\begin{equation}\label{eq_sobolim1}
    \lim_{s\to 1^-}p (1-s)\,\int_{\R^n}\int_{\R^n} \frac{\vert f(x)-f(y)\vert^p}{\Vert{x-y}\Vert^{n+p s}_{L}} \d x\d y
    = \int_{\R^n} \Vert \nabla f(x) \Vert_{\omp L}\d  x,
\end{equation}
where the convex body $\om K$, defined for $\xi\in\sn$ by
\begin{equation}\label{eq_moment}
    h_{\om\! L}(\xi)^p = \int_{\sn}  \vert\langle \xi, \eta\rangle\vert^p \rho_L(\eta)^{n+p} \d \eta,
\end{equation}
is a multiple of the $L^p$ centroid body of $L$. This can be proved as in \cite{Ludwig:fracnorm}, where the corresponding result was established for a convex body $L$ (with a different normalization of $\om L$). It also follows from Theorem \ref{thm_limitM}. Indeed, by \eqref{eq_dualmixed_func} and \eqref{eq_limitK},
\begin{align}
    \lim_{s\to 1^-}p(1-s)\,\int_{\R^n}\int_{\R^n} \frac{\vert f(x)-f(y)\vert^p}{\Vert{x-y}\Vert^{n+p s}_{L}} \d x\d y=  \tilde V_{-p}(L, \opp pf).
\end{align}
Using that
\begin{equation}\label{eq_lyz}
    \opp pf= \opp p \os{p} f
\end{equation}
for $f\in W^{1,p}(\R^n)$, which follows from \eqref{eq_LYZ} by setting $g= \vert\langle \cdot,\eta\rangle\vert^p$ for $\eta\in\sn$ and using \eqref{eq_defop} and \eqref{eq_fpp_def} (cf.~\cite{LYZ2006}),
and that
\begin{equation}\label{eq_interchange}
    V_p(K, \om L)=\tilde V_{-p}(L, \opp p K)
\end{equation}
for $K$ a convex body and $L$ a star body, a well-known relation that follows from Fubini's theorem,  we now obtain \eqref{eq_sobolim1} from the first equation in \eqref{eq_funMink}. 

\goodbreak
Using \eqref{eq_sobolim1}, we  obtain from \eqref{eq_optimalfracsobo} in the limit as $s\to1^-$ for a given $f\in W^{1,p}(\R^n),$ the following question: For which star bodies $L\subset\R^n$ is
\begin{equation}\label{eq_optnewsob}
    \inf\Big\{\int_{\R^n}\|\nabla f(x) \|_{\omp L}\d x: L \text{ star body}, \vol{L}=\omega_n\Big\} 
\end{equation}
attained?
By \eqref{eq_funMink} and the dual mixed volume inequality \eqref{eq_mixedvolume}, we have
\begin{equation}
    \frac 1n\int_{\R^n} \|\nabla f(x) \|^p_{\omp L}\d x =  V_p(\os p {f},\om L)
    = \tilde V_{-p}(L, \opp p f)
\ge \vol{L}^{(n+p)/n} \vol{\opp p f}^{-p/n},
\end{equation}
with equality precisely if $L$ and $\opp pf$ are dilates, where we have used \eqref{eq_lyz} and \eqref{eq_interchange}. From Theorem \ref{thm_limitM}, we obtain that a suitably scaled sequence of optimal $s$-fractional Sobolev bodies converges to a multiple of the optimal body for \eqref{eq_optnewsob} as $s\to 1^-$.

\subsection*{Acknowledgments}
J.~Haddad was partially supported by CNPq, reference no. PQ 305559/2021-4 and by PAIDI 2020, reference no.\ PY20$\_$00664.
M.~Ludwig was supported, in part, by the Austrian Science Fund (FWF):  P~34446.

\end{document}